\setlist[enumerate,itemize]{itemsep=3pt}
\newtheorem{theorem}{Theorem}[section]
\newtheorem{lemma}[theorem]{Lemma}
\newtheorem{conjecture}[theorem]{Conjecture}
\newtheorem{proposition}[theorem]{Proposition}
\newcommand{\A}{\mathcal{A}}
\newcommand{\B}{\mathscr{B}}
\renewcommand{\P}{\mathscr{P}}
\newcommand\floor[1]{\lfloor#1\rfloor}
\newcommand{\ovD}[1]{\overline{\mbox{$#1$}\raisebox{0mm}{}}}
\newcommand{\ovU}[1]{\overline{\mbox{$#1$}\raisebox{2.5mm}{}}}
\DeclareMathOperator{\des}{\mathrm{des}}
\DeclareMathOperator{\asc}{\mathrm{asc}}
\DeclareMathOperator{\cdes}{\mathrm{cdes}}
\DeclareMathOperator{\casc}{\mathrm{casc}}
\crefname{eqrl}{Equivalence}{Equivalences}
\crefname{itm}{}{}
\numberwithin{equation}{section}
\numberwithin{figure}{section}
\numberwithin{table}{section}
\begin{document}

\title[A Toeplitz property on certain permutations]
{A Toeplitz property of ballot permutations\\ and odd order permutations}

\author[D.G.L. Wang]{David G.L. Wang$^\dag$$^\ddag$}
\address{
$^\dag$School of Mathematics and Statistics, Beijing Institute of 
Technology, 102488 Beijing, P. R. China\\
$^\ddag$Beijing Key Laboratory on MCAACI, Beijing Institute of 
Technology, 102488 Beijing, P. R. China}
\email{glw@bit.edu.cn; david.combin@gmail.com}

\author[J.J.R. Zhang]{Jerry J.R. Zhang}
\address{School of Mathematics and Statistics, Beijing Institute of 
Technology, 102488 Beijing, P. R. China}
\email{jrzhang.combin@gmail.com}

\keywords{ballot permutation, odd order permutation, Toeplitz property}

\subjclass[2010]{05A19 05A05 15B05 05A15}

\thanks{Corresponding author: David G.L.\ Wang.
This paper was supported by General Program of National Natural 
Science Foundation of China (Grant No.\ 11671037).}

\begin{abstract}   
We give a new semi-combinatorial proof for the equality of the number of ballot permutations of length $n$ and the number of odd order permutations of length $n$, which is due to Bernardi, Duplantier and Nadeau. Spiro conjectures that the descent number of ballot permutations and certain cyclic weight of odd order permutations of the same length are equi-distributed. We present a bijection to establish a Toeplitz property for ballot permutations with any fixed number of descents, and a Toeplitz property for odd order permutations with any fixed cyclic weight. This allows us to refine Spiro's conjecture by tracking the neighbors of the largest letter in permutations. 
\end{abstract}

\maketitle

\section{Introduction}
Let $\mathfrak{S}_n$ be the symmetric group
of permutations of the set $[n]=\{1,2,\dots,n\}$.
Let $\pi=\pi_1\pi_2\dotsm\pi_n\in\mathfrak{S}_n$.
The \emph{signature} of $\pi$ is defined to be
the sequence $(q_1,q_2,\dots,q_{n-1})$
where 
\[
q_i=\begin{cases}
-1,&\text{if $\pi_i>\pi_{i+1}$};\\
1,&\text{if $\pi_i<\pi_{i+1}$}.
\end{cases}
\]
Niven~\cite{Niv68} found a determinantal formula for the number 
of permutations of length~$n$ with a prescribed signature,
and showed that this number attains its maximum if and only if
the signature is for an Andr\'e permutation~\cite{And1881};
see de Bruijn~\cite{deB70} for a recursive proof.
A pair $(\pi_i,\,\pi_{i+1})$ of letters
is a \emph{descent} (resp., an \emph{ascent}) 
if $\pi_i<\pi_{i+1}$ (resp., $\pi_i>\pi_{i+1}$).
Denote the number of descents (resp., ascents) of $\pi$ by $\des(\pi)$
(resp., $\asc(\pi)$). 
We call the number
\[
h(\pi)
=\asc(\pi_1\pi_2\dotsm\pi_n)-\des(\pi_1\pi_2\dotsm\pi_n)
\]
the \emph{height} of $\pi$.
The permutation $\pi$ is said to be a \emph{ballot permutation} 
if the height of any prefix of~$\pi$ is nonnegative, namely,
$h(\pi_1\pi_2\cdots\pi_i)\ge 0$
for all $i\in[n]$.
The number of ballot permutations of height $0$ in $\mathfrak{S}_{2n+1}$, 
or \emph{Dyck permutations} of length~$n$,
is the Eulerian-Catalan number; see Bidkhori and Sullivant~\cite{BS11}.
A classical coin-tossing game problem
concerning the descent-ascent structure 
in a sequence of independent random variables of values $\pm1$
was considered by Chung and Feller~\cite{CF49}.

In this paper, we concern the following beautiful result
which is due to Bernardi, Duplantier and Nadeau \cite{BDN10}.

\begin{theorem}[Bernardi et al.]\label{thm:b=p}
The number of ballot permutations of length $n$ is
\[
p_n=\begin{cases}
(n-1)!!^2,&\text{if $n$ is even},\\[3pt]
n!!\cdot (n-2)!!,&\text{if $n$ is odd},
\end{cases}
\]
where $(2m-1)!!=(2m-1)(2m-3)\cdots3\cdot 1$.
\end{theorem}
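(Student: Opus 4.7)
The plan is to show the equivalent statement $p_n=o_n$, where $o_n$ denotes the number of permutations of $[n]$ whose cycle lengths are all odd. The exponential formula applied to the EGF $\frac{1}{2}\log\frac{1+x}{1-x}$ of odd cycles gives
\[
\sum_{n\ge0}o_n\,\frac{x^n}{n!}=\sqrt{\frac{1+x}{1-x}},
\]
whose coefficients are exactly the double factorials displayed in the statement; so it suffices to prove $p_n=o_n$.

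My approach is to verify that both quantities satisfy the common second-order recurrence
\[
r_n=r_{n-1}+(n-1)(n-2)\,r_{n-2}\qquad(n\ge 2),\qquad r_0=r_1=1.
\]
For $o_n$ this is combinatorially transparent: those $\sigma$ fixing $n$ contribute $o_{n-1}$, while if $n$ lies in an odd cycle of length $\ge 3$ one records the ordered pair $(\sigma^{-1}(n),\sigma(n))$—whose coordinates are distinct elements of $[n-1]$—and then deletes $n$ together with $\sigma(n)$ from that cycle, obtaining an odd order permutation on the remaining $n-2$ letters and giving the $(n-1)(n-2)\,o_{n-2}$ term. Reversibility is immediate: given $(p,s)$ and an odd order permutation on $[n-1]\setminus\{s\}$, one splices $n$ and $s$ back into the cycle through $p$, lengthening it by two and preserving odd parity.

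For $p_n$ I would classify ballot permutations $\pi\in\mathfrak{S}_n$ by the position $k$ of the largest letter $n$. The ballot condition excludes $k=1$, and the case $k=n$ contributes $p_{n-1}$ because the prefix $\pi_1\dotsm\pi_{n-1}$ is then an arbitrary ballot permutation of $[n-1]$. For $2\le k\le n-1$ the letter $n$ is an interior peak $\pi_{k-1}<n>\pi_{k+1}$, and I plan to construct a decoration-and-deletion map that strips $n$ together with one of its two neighbours, yielding a pair $(\pi',d)$ with $\pi'$ a ballot permutation of length $n-2$ and $d$ a decoration in a set of size $(n-1)(n-2)$. The main obstacle is that deleting only $n$ need not preserve the ballot property: when $\pi_{k-1}>\pi_{k+1}$ every height past position $k$ drops by one and can turn negative, and this is exactly where the second, carefully chosen deletion must compensate. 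Prescribing which neighbour to strip (controlled by the relative order of $\pi_{k-1}$ and $\pi_{k+1}$, and possibly by the first place where the truncated height profile dips), recording just enough data to reconstruct what was removed, and then verifying that the inverse insertion always lands back in a ballot permutation is the delicate step where most of the combinatorial work sits. Once this recurrence is in hand, induction on $n$ yields $p_n=o_n$ and hence the claimed formulas.
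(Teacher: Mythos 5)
Your reduction is sound in outline, and the easy half matches the paper: both you and the authors count odd order permutations by looking at the two neighbours of $n$ in its cycle, getting $o_n=o_{n-1}+(n-1)(n-2)o_{n-2}$, and both reduce the theorem to showing that the ballot count $b_n$ obeys the same recurrence. But the ballot half of your argument is exactly where the content of the theorem lies, and you have not supplied it: you only announce a ``decoration-and-deletion map'' stripping $n$ and one neighbour and yourself flag the verification as the ``delicate step where most of the combinatorial work sits.'' That step is a genuine gap, not a routine check. Worse, the most natural instance of your plan cannot work: if the decoration were (essentially) the ordered pair $(i,j)$ of neighbours of $n$, you would need the number $b_n(i,j)$ of ballot permutations containing the factor $inj$ to equal $b_{n-2}$ for every pair, and this is false --- for $n=5$ one has $b_5(1,4)=1$ and $b_5(4,1)=5$ while $b_3=3$ (see the matrices of $b_n(i,j)$ in \cref{sec:Toeplitz}). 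Only the symmetrized sums are constant, $b_n(i,j)+b_n(j,i)=2b_{n-2}$, so any deletion map keyed to which neighbour is removed must mix permutations with factor $inj$ and those with factor $jni$ in a nontrivial, $(i,j)$-dependent way; ``the relative order of $\pi_{k-1}$ and $\pi_{k+1}$ plus the first dip of the truncated height profile'' is not enough data to make this reversible, and you give no construction or proof.

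For comparison, the paper does not attempt a direct deletion bijection at all. It proves the identity $b_n(i,j)+b_n(j,i)=2b_{n-2}$ by a telescoping argument: an $\omega$-decomposition of ballot permutations (\cref{lem:delta}) and a bijection between the sets $X_n(\lambda)$ and $X_n(\mu)$ with $\lambda=in(j-1)j$ and $\mu=(j-1)jni$ (\cref{thm:X=Y}), which yields $b_n(i,j)+b_n(j,i)=b_n(i,\,j-1)+b_n(j-1,\,i)$ and reduces everything to the adjacent case $|i-j|=1$ handled by \cref{lem:|i-j|=1}; summing over $i\neq j$ then gives $b_n=b_{n-1}+(n-1)(n-2)b_{n-2}$. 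This is why the authors call their proof ``semi-bijective.'' To complete your proposal you would either have to reproduce an argument of this kind or construct a genuinely new bijection between interior-peak ballot permutations and $\B_{n-2}$ times a set of $(n-1)(n-2)$ decorations whose decoration is not simply the neighbour pair; as it stands, the key step is missing.
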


Bernardi et al.~\cite{BDN10} obtained \cref{thm:b=p}
by considering more generalized paths containing horizontal steps, 
called \emph{well-labelled positive paths}. 
They constructed a bijection between well-labelled positive paths 
of size $n$ with $k$ horizontal steps and matchings on $[2n]$
having $k$ pairs $(i,j)$ with $i\in[n]$ and $j\in[n+1,\,\dotsc,\,2n-1]$,
and thus obtained an explicit formula 
for the number of well-labelled positive paths of size $n$ 
having $k$ horizontal steps. Taking $k=0$ in their formula yields \cref{thm:b=p}.

Denote by $\P_n$ the set of odd order permutations 
of $[n]$, viz., the set of permutations of $[n]$ whose every cycle is of odd length. 
By considering the neighbours of the letter $n$, we see that
\[
\abs{\P_n}=\abs{\P_{n-1}}+(n-1)(n-2)\abs{\P_{n-2}}.
\]
This recurrence gives 
\[
\abs{\P_n}=p_n
\]
immediately, where $p_n$ is the number defined in \cref{thm:b=p}.
In order to find an analogue for the descent statistic in the context
of odd order permutations, Spiro~\cite{Spi18X} 
introduced the following interesting notion. 
For a permutation $\pi$, Spiro defines
\[
M(\pi)=\sum_{c}\min\brk1{\cdes(c),\,casc(c)},
\]
where the sum runs over all cycles of $\pi$, with the \emph{cyclic descent} 
\[
\cdes(c)=\abs{\{i\in[k]\colon c_i>c_{i+1}\ \text{where $c_{k+1}=c_1$}\}},
\]
and the \emph{cyclic ascent}
\[
\casc(c)=\abs{\{i\in[k]\colon c_i<c_{i+1}\ \text{where $c_{k+1}=c_1$}\}}
=\abs{c}-\cdes(c),
\]
where $\abs{c}$ is the length the cycle $c$.
We call 
\[
w(c)=\min\brk1{\cdes(c),\,casc(c)}
\]
the \emph{cyclic weight} of $c$, and $w(\pi)=M(\pi)$
the \emph{cyclic weight} of $\pi$.

\begin{conjecture}[Spiro]\label{conj:Spiro}
Let $n\ge 1$ and $0\le d\le \floor{(n-1)/2}$.
Then the number of ballot permutations of length $n$ 
with descent $d$
equals the number of odd order permutations of length $n$
with cyclic weight~$d$.
\end{conjecture}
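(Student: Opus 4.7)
The plan is to prove Spiro's conjecture by induction on $n$, refining both sides by the neighbors of the largest letter $n$. On the ballot side, let $B_n(d; a, b)$ count ballot permutations $\pi \in \mathfrak{S}_n$ with $\des(\pi) = d$ whose entries immediately flanking $n$ in one-line notation are $a$ (to the left) and $b$ (to the right), with conventions for the cases when $n$ sits at an endpoint. On the odd order side, let $P_n(d; a, b)$ count $\sigma \in \P_n$ with $w(\sigma) = d$ having cycle-neighbors $\sigma^{-1}(n) = a$ and $\sigma(n) = b$, with the degenerate case $(a, b) = (n, n)$ recording that $n$ is a fixed point. The refined conjecture is $B_n(d; a, b) = P_n(d; a, b)$, and summing over $(a, b)$ recovers Spiro's statement.

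The first step is to establish a \emph{Toeplitz property} on each side: the refined count depends on $(a, b)$ only through the single binary datum $[a < b]$, that is, it is invariant under any order-preserving relabeling of $[n-1]$ that preserves this datum. On the ballot side this follows because both the descent statistic and the ballot condition are insensitive to such a relabeling; on the odd order side the same is true of $\cdes$ and $\casc$, and hence of $w$. The second step is to delete $n$ and set up a common recurrence. On the ballot side, removing $n$ replaces $\ldots a\, n\, b \ldots$ by $\ldots a\, b \ldots$, producing a shorter ballot permutation whose descent count differs from $\des(\pi)$ by a predictable amount determined by $[a < b]$ and the endpoint cases. On the odd order side, deleting $n$ from its odd-length cycle yields an even-length cycle that must be repaired by a further cycle surgery to return to $\P_{n-1}$; tracking the effect on $w$ through this surgery produces a recurrence that, after the Toeplitz property collapses the dependence on $(a, b)$, matches the ballot recurrence.

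The main obstacle will be controlling $w(c) = \min(\cdes(c), \casc(c))$ under the cycle surgery on the odd order side. Because the minimum is piecewise linear in $(\cdes, \casc)$, a local modification near $n$ can flip which of the two is dominant, so the change in $w$ is \emph{a priori} nonlocal. The key leverage should come from the fact that every cycle in $\P_n$ has odd length, which forces $\cdes(c) \ne \casc(c)$ and makes exactly one branch unambiguously the minimum; this should reduce the analysis to a short case breakdown on the relative order of $a$, $b$, and their further cycle-neighbors, which I expect to match the ballot recurrence term by term once the Toeplitz invariance is applied.
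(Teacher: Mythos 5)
There is a genuine gap, and in fact your central refined claim is false. You propose to prove the identity $B_n(d;a,b)=P_n(d;a,b)$ for the counts refined by the two neighbors of the letter $n$, but these refined counts do not agree: already for $n=4$, $d=1$ one has $\B_{4,1}(1,3)=\emptyset$ while $\P_{4,1}(1,3)=\{(143)\}$, and $\B_{4,1}(3,1)=\{2341,3412\}$ has two elements while $\P_{4,1}(3,1)=\{(341)\}$ has one. The cyclic counts are symmetric in $(a,b)$ (reverse each cycle), whereas the ballot counts are not, so any correct refinement must symmetrize the ballot side; the natural refined statement is $b_{n,d}(a,b)+b_{n,d}(b,a)=2p_{n,d}(a,b)$, and that is itself only a conjecture (verified for small $d$), not something a short case analysis near $n$ is known to deliver. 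Your first step is also too strong: the refined count does not depend on $(a,b)$ only through $[a<b]$. For instance $b_5(1,2)=3$, $b_5(1,3)=2$, $b_5(1,4)=1$. The invariance that actually holds is the Toeplitz (diagonal-shift) property $b_{n,d}(a,b)=b_{n,d}(a+1,b+1)$ and likewise for $p_{n,d}$, and this cannot be obtained by the relabeling argument you sketch: the ballot condition is a global prefix-height constraint, not invariant under changing only the letters flanking $n$, and proving even the shift-by-one invariance requires a genuinely nontrivial bijection (a local ``core replacement'' around $n$ followed by an order-preserving ``straightening'' of an interval of letters), with a separate argument that descents and the ballot property are preserved.

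Your second step has a further problem: deleting $n$ from $\ldots a\,n\,b\ldots$ does not in general produce a ballot permutation (when $a>b$ all subsequent prefix heights drop by $1$), so the ballot-side recurrence is not as clean as stated; the workable recurrences delete $n$ together with a value-adjacent neighbor, giving $b_{n,d}(i,j)=b_{n-2,d-1}$ when $|i-j|=1$, and then $b_{n,d}=b_{n-1,d}+\sum_{i\ne j}b_{n,d}(i,j)$, with the analogous facts on the odd-order side. Even granting all of that, the argument only reduces Spiro's conjecture to the symmetrized refined identity above (equivalently, to its $a=1$ instances, by the Toeplitz property); the ``term by term'' matching of the two recurrences that you defer to a final case breakdown is exactly the part that is open, and your unsymmetrized version of it is contradicted by the $n=4$ example. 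So the proposal does not constitute a proof, and the statement itself remains a conjecture.
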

Spiro confirmed \cref{conj:Spiro} for $d\le 3$,
$d=\floor{(n-1)/2}$, and $d=(n-3)/2$.

In the next section we give a new proof of \cref{thm:b=p} 
by using certain combinatorial decomposition of ballot permutations
with respect to the neighbors of the letter $n$.
In \cref{sec:Toeplitz},
we display a bijection to establish 
a Toeplitz property for the number of ballot permutations of length $n$
with a fixed number of descents.
A slight modification of the bijection gives the Toeplitz property
for the number of odd order permutations of length $n$ with a fixed cyclic weight.
These Toeplitz properties lead us to a refinement conjecture of \cref{conj:Spiro};
see \cref{conj:nd1j}. In \cref{sec:easy},
we show some easy cases of \cref{conj:nd1j}.

\section{A new semi-bijective proof of \cref{thm:b=p}}\label{sec:reproof}

We give an overview of notion and notation in combinatorics on words 
which will be of use; see \cite{Lot02B,Lot05B}. 
For any word $w$ of length~$n$,
we denote its $i$th letter by~$w_i$, 
denote its alphabet
$\A(\alpha)=\{w_1,\dots,w_n\}$,
denote its length~$n$ by~$\ell(w)$,
and write $w_{-1}=w_n$.
Denote the \emph{reversal} of $w$ by $w'=w_nw_{n-1}\dotsm w_1$.
In particular, the reversal of the empty word
$\epsilon$ is $\epsilon$ itself.
We call a word $u$ a \emph{factor} (resp., \emph{prefix}) of~$w$ 
if there exist words $x$ and~$y$ such that $w=xuy$ (resp., $w=uy$). 
We say that~$w$ is \emph{ballot} if 
$h(u)\ge0$ for any prefix $u$ of $w$.

The notions of reversal and factor have a cyclic version.
Let $\pi=(c_1)\cdots (c_k)\in\mathfrak{S}_n$, where $(c_i)$ are the cycles of $\pi$.
We say that a word $u$ 
is a \emph{cyclic factor} of~$\pi$ if $u$ is a factor of some word $v$ 
such that~$(v)$ is a cycle of $\pi$.
For example, the permutation $\pi=(145)(26837)$ has a cyclic factor $(372)$. 

Denote by $\B_n$ the set of ballot permutations of $[n]$. 
Denote
\[
\B_{n,d}=\{\pi\in\B_n\colon \des(\pi)=d\}
\quad\text{and}\quad
\P_{n,d}=\{\pi\in\P_n\colon w(\pi)=d\}.
\]
For any letters $i$ and~$j$,
denote by $\B_{n,d}(i,j)$ 
the set of permutations in $\B_{n,d}$ that contain the factor~$inj$, 
by $\P_{n,d}(i,j)$
the set of permutations in $\P_{n,d}$ that contain the cyclic factor $inj$.
For example, we have 
\begin{alignat*}{2}
\B_{4,1}(1,3)&=\emptyset,\quad\quad&
\B_{4,1}(3,1)&=\{2341,\,3412\},\\
\P_{4,1}(1,3)&=\{(143)\},\quad\quad&
\P_{4,1}(3,1)&=\{(341)\}.
\end{alignat*}
We use lower-case letters $b$ and $p$, 
in replace of $\B$ and $\P$ respectively, 
to denote the corresponding set cardinality, 
such as $p_{n,d}(i,j)=\abs{\P_{n,d}(i,j)}$.
It is clear that
\[
b_{n,d}=p_{n,d}=\begin{cases}
1,&\text{if $d=0$};\\[3pt]
0,&\text{if $d>\floor{(n-1)/2}$}.
\end{cases}
\]

\begin{lemma}\label{lem:|i-j|=1}
For any $i,j\in[n]$ such that $\abs{i-j}=1$, 
\[
b_{n,d}(i,j)=b_{n-2,\,d-1}
\quad\text{and}\quad
p_{n,d}(i,j)=p_{n-2,\,d-1}.
\]
\end{lemma}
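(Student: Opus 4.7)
The plan is to exhibit a single deletion-and-relabel bijection that treats the ballot and odd-order halves of the lemma uniformly. Given $\pi\in\B_{n,d}(i,j)$, write $\pi=\alpha\,i\,n\,j\,\beta$ and define $\phi(\pi)$ by deleting the two letters $n$ and $j$ from $\pi$ and then replacing each remaining letter $k>j$ by $k-1$, so the result is a word on $[n-2]$. The inverse candidate $\psi$ is the obvious reinsertion: shift every letter $\ge j$ up by one, locate the letter that now plays the role of $i$, and splice the pair $n\,j$ directly after it.

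Three verifications then drive the ballot half. For the descent count, the only adjacent pairs of $\pi$ that change are $(i,n)$, $(n,j)$, and $(j,b)$, where $b$ is the letter following $j$ (absent if $j$ ends $\pi$), and these three pairs are replaced by the single pair $(i,b)$ in $\phi(\pi)$. Among $(i,n),(n,j)$ there is exactly one descent, namely $(n,j)$. The hypothesis $|i-j|=1$ together with $b\notin\{i,j,n\}$ forces $b$ to sit strictly below both $i,j$ or strictly above both, whence $[j>b]=[i>b]$ and the pairs $(j,b),(i,b)$ contribute equally to $\des$. Thus $\des(\phi(\pi))=d-1$. For the ballot condition, write $p$ for the position of $n$ in $\pi$; every prefix of $\phi(\pi)$ is obtained from a prefix of $\pi$ either identically (lengths at most $p-1$) or by excising the two letters $n,j$ (lengths at least $p$). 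The $+1$ height contribution from the ascent $(i,n)$ cancels the $-1$ from $(n,j)$, and by the same side-of-$b$ argument $(j,b)$ and $(i,b)$ contribute equally to $h$, so each height of $\phi(\pi)$ equals a height of $\pi$ and the ballot property passes through. The reverse verification for $\psi$ is symmetric: inserting $n$ raises $h$ by one, inserting $j$ returns it, and later heights are undisturbed.

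The cyclic version is parallel. In $\pi\in\P_{n,d}(i,j)$ the cyclic factor $inj$ lies inside a single odd cycle $c$, and deleting $n,j$ shortens $c$ by two, preserving its parity while leaving the other cycles untouched. The same side-of-$b$ calculation applied to the cyclic pair $(j,b)$ drops both $\cdes(c)$ and $\casc(c)$ by one, so $w(c)=\min(\cdes(c),\casc(c))$ drops by one and $w(\phi(\pi))=d-1$. The degenerate case $|c|=3$ collapses $c$ to the fixed point $(i)$, of cyclic weight zero, which matches.

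The main obstacle is the ballot bookkeeping: one must ensure that no newly created prefix of $\phi(\pi)$ drops below height zero. This reduces to the single combinatorial observation that $|i-j|=1$ forbids $b$ from lying between $i$ and $j$, which in turn implies the $h$-contributions of $(j,b)$ and $(i,b)$ coincide; this same observation powers both halves of the lemma.
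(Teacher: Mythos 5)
Your proof is correct and takes essentially the same route as the paper: delete the letters $n$ and $j$ and relabel order-preservingly, with the inverse reinserting $n\,j$ after $i$; the paper merely asserts as clear the descent, ballot, and cycle-structure checks that you spell out (the key point in both being that $\abs{i-j}=1$ forces the neighboring letter to compare the same way to $i$ as to $j$).
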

\begin{proof}
Fix a letter $i$. Suppose that $i<j$. Then $j=i+1$.
For $\pi\in\B_{n,d}(i,\,i+1)$,
define $\phi(\pi)$ to be the permutation obtained from $\pi$ by
removing the letters $i+1$ and $n$
and arranging the remaining letters in the order-preserving manner
so that $\phi(\pi)\in\mathfrak{S}_{n-2}$. 
It is clear that $\phi$ is a bijection between the sets
$\B_{n,d}(i,\,i+1)$ and $\B_{n-2,\,d-1}$. 
The other case $i>j$ can be handled similarly.
For odd order permutations, one obtains the desired equality 
by using the same operation of $\phi$ and keeping the cycle structure.
\end{proof}

For any permutation $\pi$ on a set of $n$ positive integers, 
we define the \emph{standard form} of $\pi$ 
to be the permutation $\sigma\in\mathfrak{S}_n$
such that $\pi_i<\pi_j$ if and only if $\sigma_i<\sigma_j$
for all pairs $(i,j)$.
Denote by $\B$ the set of finite permutations on positive integers 
whose standard forms are ballot permutations.
Let $\omega\in\B\setminus\{\epsilon\}$. 
We say that a ballot permutation $\pi\in\B_n$
is \emph{$\omega$-decomposable} if 
$\pi$ is the concatenation $\alpha\omega\gamma\delta$ 
of factors $\alpha,\omega,\gamma,\delta$ such that 
\begin{equation}\label{cond:omega}
h(\alpha\omega\gamma)=h(\omega),
\end{equation}
where $\alpha$, $\gamma$ and $\delta$ are allowed to be the empty word $\epsilon$.
Let $X_n(\omega)$ be the set of 
$\omega$-decomposable ballot permutations $\pi\in\mathfrak{S}_n$. 
Define the \emph{$\omega$-decomposition} of a permutation $\pi\in X_n(\omega)$
to be the $4$-tuple $(\alpha,\omega,\gamma,\delta)$ 
such that $\pi=\alpha\omega\gamma\delta$ and that~$\gamma$ is 
the longest word satisfying
\[
\gamma'\omega_{-1}\in\B.
\]
From definition we see that the $\omega$-decomposition 
of any permutation in $X_n(\omega)$ uniquely exists.
In this case, we write 
$\pi=(\alpha,\omega,\gamma,\delta)$.
In addition,
we will use the convenience
$\gamma_{-1}= \omega_{-1}$
if $\gamma=\epsilon$.

\begin{lemma}\label{lem:delta}
Suppose that $\pi=(\alpha,\omega,\gamma,\delta)$.
If $\delta\not\in\B$, then $\gamma_{-1}>\delta_1$ and $h(\omega)\ne 1$.
\end{lemma}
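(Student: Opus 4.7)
I plan to prove both conclusions by contradiction, in each case exploiting the maximality of $\gamma$ built into the $\omega$-decomposition. The common tool is to manufacture a strictly longer word $\gamma^{*}$ obtained by appending an initial segment of $\delta$ to $\gamma$, verify that $(\gamma^{*})'\omega_{-1}\in\B$, and thereby contradict the definition of $\gamma$ as the longest such word.

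For the first conclusion $\gamma_{-1}>\delta_1$, suppose to the contrary that $\gamma_{-1}<\delta_1$, and set $\gamma^{+}=\gamma\delta_1$. The word $(\gamma^{+})'\omega_{-1}=\delta_1\gamma'\omega_{-1}$ begins with the ascent $\delta_1>\gamma_{-1}$ (in the convention of the paper), contributing $+1$ to the height of every prefix of length at least $2$. Since every prefix of $\gamma'\omega_{-1}$ has nonnegative height by $\gamma'\omega_{-1}\in\B$, the same holds for every prefix of $\delta_1\gamma'\omega_{-1}$, and the single-letter prefix $\delta_1$ has height $0$. Hence $(\gamma^{+})'\omega_{-1}\in\B$, contradicting the maximality of $\gamma$.

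For the second conclusion $h(\omega)\ne 1$, suppose $h(\omega)=1$ and use $\delta\notin\B$ to pick the least index $k$ with $H_k:=h(\delta_1\dotsm\delta_k)<0$. The minimality forces $H_k=-1$ and $H_0,\dotsc,H_{k-1}\ge0$. By part~(a) I know $\gamma_{-1}>\delta_1$, so the junction sign $s(\delta_1,\gamma_{-1})$ equals $-1$. Set $\gamma^{++}=\gamma\delta_1\dotsm\delta_k$ and verify prefix by prefix that $(\gamma^{++})'\omega_{-1}=\delta_k\dotsm\delta_1\gamma'\omega_{-1}$ lies in $\B$. Within the reversed block $\delta_k\dotsm\delta_1$, the length-$l$ prefix has height $H_{k-l+1}-H_k=H_{k-l+1}+1\ge 0$ via the reversal identity $h(w')=-h(w)$; the junction contributes $-1$, exactly cancelling the cumulative $+1$ from that reversed block; and the subsequent heights coincide with the nonnegative heights along $\gamma'\omega_{-1}\in\B$. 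This yields a strictly longer valid extension $\gamma^{++}$ of $\gamma$, contradicting maximality.

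The main obstacle is pinpointing where the hypothesis $h(\omega)=1$ is genuinely used, since the height bookkeeping sketched above is formally independent of $h(\omega)$. I expect this hypothesis to enter when checking that the enlarged 4-tuple still conforms to the $\omega$-decomposability identity $h(\alpha\omega\gamma)=h(\omega)$ inherent in the definition; the accounting $h(\alpha\omega\gamma^{++})=h(\omega)+s(\gamma_{-1},\delta_1)+H_k=h(\omega)+1-1=h(\omega)$ preserves this identity precisely because of the $\pm 1$ cancellation produced by part~(a) and the minimal choice of $k$. Carefully tying this cancellation to the hypothesis $h(\omega)=1$, and distinguishing what fails when $h(\omega)\ne 1$, is the subtle point the proof must address.
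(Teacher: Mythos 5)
Your overall strategy (extend $\gamma$ into $\delta$ and contradict the extremal choice of $\gamma$) is the same as the paper's, but the execution rests on a junction-sign error that invalidates both halves. Despite the slip in the paper's prose definition of ascent/descent, the signature definition and every computation in the paper (e.g.\ $h(in(j-1)j)=1$ in the proof of \cref{thm:X=Y}, and the step $h(\delta_j\cdots\delta_1\gamma_{-1})=0$ in the paper's own proof of this lemma) show that an increasing adjacent pair contributes $+1$ to $h$ and a decreasing pair $-1$. With that convention your first step collapses: assuming $\gamma_{-1}<\delta_1$, the word $(\gamma\delta_1)'\omega_{-1}=\delta_1\gamma'\omega_{-1}$ begins with the \emph{decreasing} pair $(\delta_1,\gamma_{-1})$, so its length-two prefix has height $-1$ and the word is not ballot; no contradiction is produced. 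Moreover, even if the sign were in your favour, appending a single letter changes $h(\alpha\omega\gamma)$ by $\pm1$, so $\gamma\delta_1$ can never satisfy the identity $h(\alpha\omega\gamma)=h(\omega)$ of \cref{cond:omega}; since the maximality of $\gamma$ is maximality subject to that identity together with $\gamma'\omega_{-1}\in\B$ (this is how it is used in the proof of \cref{thm:X=Y}), a one-letter extension cannot contradict it. The paper instead extends by the whole segment $\delta_1\cdots\delta_j$ up to the first prefix of $\delta$ of height $-1$, so that when $\gamma_{-1}<\delta_1$ the junction's $+1$ cancels that $-1$ and \emph{both} defining conditions are restored for the longer factor.

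The same sign error undoes your second half and explains the difficulty you acknowledge at the end. With $\gamma_{-1}>\delta_1$ the forward junction contributes $-1$, so $h(\alpha\omega\gamma\delta_1\cdots\delta_k)=h(\omega)-2$, not $h(\omega)$: the extension does not preserve \cref{cond:omega}, hence there is no maximality contradiction to be had --- and there had better not be, since your accounting, if correct, would be independent of $h(\omega)$ and would show that $\delta\notin\B$ is outright impossible, which is false. The hypothesis $h(\omega)=1$ enters exactly at this point, and not through the decomposition at all: $\alpha\omega\gamma\delta_1\cdots\delta_k$ is a prefix of $\pi$, and if $h(\omega)=1$ its height equals $-1$, contradicting $\pi\in\B$; when $h(\omega)\ge 2$ nothing goes wrong, which is why the lemma only excludes $h(\omega)=1$. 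So the second conclusion is a ballotness statement about $\pi$ itself rather than a maximality statement, and your proposal, which explicitly concedes it cannot tie the bookkeeping to $h(\omega)=1$, is missing precisely this step.
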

\begin{proof}
Denote by $\chi$ the characteristic function defined by 
$\chi(P)=1$ if a proposition~$P$ is true,
and $\chi(P)=0$ if $P$ is false.
Suppose that $\delta\not\in\B$. Then $\ell(\delta)\ge 2$.
Let~$j$ be the minimum index such that 
\[
h(\delta_1\delta_2\cdots\delta_j)=-1.
\]
Then the factor $\eta=\gamma\delta_1\delta_2\cdots\delta_j$,
which is longer than~$\gamma$, satisfies 
\begin{align}
h(\alpha\omega\eta)
&=h(\alpha\omega\gamma)+(-1)^{\chi(\gamma_{-1}>\delta_1)}
+h(\delta_1\delta_2\cdots \delta_j)\notag\\
&=h(\omega)+(-1)^{\chi(\gamma_{-1}>\delta_1)}-1.\label{pf:omega}
\end{align}
On the other hand, note that
\[
h(\delta_{j-1}\cdots\delta_2\delta_1)=h(\delta_1\delta_2\cdots\delta_{j-1})=0
\quad\text{and}\quad
\delta_{j-1}\cdots\delta_2\delta_1\in\B.
\]
If $\gamma_{-1}<\delta_1$, then 
\[
h(\delta_j\delta_{j-1}\cdots\delta_1\gamma_{-1})=0
\quad\text{and}\quad
\delta_j\delta_{j-1}\cdots\delta_1\gamma_{-1}\in\B.
\]
Since $\gamma'\omega_{-1}\in\B$, we find $\eta'\omega_{-1}\in\B$. 
In this case, \cref{pf:omega} reduces to 
$h(\alpha\omega\eta)=h(\omega)$,
contradicting the choice of $\gamma$.
This proves $\gamma_{-1}>\delta_1$.

Now assume that $h(\omega)=1$. 
Then \cref{pf:omega} reduces to
\[
h(\alpha\omega\eta)=h(\omega)-2=-1,
\]
contradicting the premise $\pi\in\B$.
This completes the proof.
\end{proof}

As will be seen, \cref{thm:X=Y} is the key in the new proof 
of \cref{thm:b=p}.

\begin{theorem}\label{thm:X=Y}
Let $\lambda=in(j-1)j$ and $\mu=(j-1)jni$ where \hbox{$i+2\le j\le n-1$}.
The map 
$(\alpha,\lambda,\gamma,\delta)
\mapsto(\gamma', \mu, \alpha', \delta)$
is a bijection between the sets $X_n(\lambda)$ and $X_n(\mu)$, with the inverse map
$(\alpha,\mu,\gamma,\delta)
\mapsto(\gamma', \lambda, \alpha', \delta)$.
\end{theorem}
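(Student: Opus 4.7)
The plan is to show that $f\colon(\alpha,\lambda,\gamma,\delta)\mapsto(\gamma',\mu,\alpha',\delta)$ sends each element of $X_n(\lambda)$ to an element of $X_n(\mu)$ whose $\mu$-decomposition is exactly $(\gamma',\mu,\alpha',\delta)$. Since the inverse map $g$ has identical form with $\lambda$ and $\mu$ swapped, the composition identity $g(f(\alpha,\lambda,\gamma,\delta))=((\alpha')',\lambda,(\gamma')',\delta)=(\alpha,\lambda,\gamma,\delta)$ will be immediate from the reversal identities $(\alpha')'=\alpha$ and $(\gamma')'=\gamma$, and likewise for $f\circ g$.

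I will use the height-additivity formula $h(uv)=h(u)+(-1)^{\chi(u_{-1}>v_1)}+h(v)$ for nonempty words $u,v$, with $\chi$ as in the proof of \cref{lem:delta}, together with $h(u')=-h(u)$. First I would verify $h(\gamma'\mu\alpha')=h(\mu)=1$: expanding both $h(\alpha\lambda\gamma)=h(\lambda)=1$ and $h(\gamma'\mu\alpha')$ via additivity, the desired equality reduces to observing that the relative order of $\gamma_1$ against $j-1$ coincides with its order against $j$, which holds because $\gamma_1\notin\{j-1,j\}$ forces $\gamma_1<j-1$ or $\gamma_1>j$. Next I would check $\gamma'\mu\alpha'\delta\in\B$ by inspecting prefix heights in four ranges: within $\gamma'$, from $\gamma' j\in\B$; across $\mu$, from the inequalities $j-1<j<n$ together with $n>i$; within $\alpha'$, using $\alpha\in\B$ (a prefix of $\pi$) and the terminal value $h(\gamma'\mu\alpha')=1$; within $\delta$, from $\delta\in\B$, which is guaranteed by \cref{lem:delta} because $h(\mu)=1$. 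The defining ballot condition $(\alpha')'\mu_{-1}=\alpha i\in\B$ will be immediate since $\alpha i$ is a prefix of $\alpha\lambda$, hence of $\pi$.

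The crux will be the maximality of $\alpha'$ as the ``$\gamma$-part'' of the $\mu$-decomposition. Suppose toward a contradiction that some nonempty prefix $\kappa$ of $\delta$ makes $\alpha'\kappa$ satisfy both $\kappa'\alpha i\in\B$ and $h(\gamma'\mu\alpha'\kappa)=h(\mu)$. Expanding the latter yields $(-1)^{\chi(\alpha_1>\kappa_1)}+h(\kappa)=0$ (with $\alpha_1$ read as $i$ if $\alpha=\epsilon$); since $\kappa\in\B$ as a prefix of $\delta\in\B$, we have $h(\kappa)\ge 0$, forcing $\alpha_1>\kappa_1$ and $h(\kappa)=1$. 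A case analysis on the position of $\kappa_1$ relative to $i,j-1,j$, and on whether $\alpha$ or $\gamma$ is empty, will show that the same $\kappa$ gives a valid extension of $\gamma$ in the original $\lambda$-decomposition, contradicting the maximality of $\gamma$: the sign functions $(-1)^{\chi(\alpha_1>\kappa_1)}$ and $(-1)^{\chi(\gamma_{-1}>\kappa_1)}$ coincide when $\kappa_1$ lies on the same side of both $\alpha_1$ and $\gamma_{-1}$, and the remaining subcases will be eliminated by combining $h(\kappa)=1$ with the ballot constraint on $\kappa'\alpha i$ and the auxiliary identity $h(\alpha)+h(\gamma)=-(-1)^{\chi(\alpha_{-1}>i)}-(-1)^{\chi(j>\gamma_1)}$ that follows from $h(\alpha\lambda\gamma)=1$. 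The main obstacle is executing this case analysis cleanly so as to cover the boundary possibilities $\alpha=\epsilon$ and $\gamma=\epsilon$ uniformly.
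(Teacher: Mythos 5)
Your items (1)--(3) (the height identity $h(\gamma'\mu\alpha')=h(\mu)=1$, the ballotness of $\gamma'\mu\alpha'\delta$, and the condition $\alpha\mu_{-1}\in\B$) follow the same route as the paper, and your reductions there are sound (one small misattribution: $\delta\in\B$ comes from \cref{lem:delta} applied to the $\lambda$-decomposition of $\pi$, i.e.\ from $h(\lambda)=1$, not from $h(\mu)=1$ --- harmless, since both equal $1$). The genuine gap is in what you yourself call the crux, the maximality of $\alpha'$. You correctly reduce it to: a nonempty prefix $\kappa$ of $\delta$ with $\kappa'\alpha i\in\B$ and $h(\gamma'\mu\alpha'\kappa)=1$ would force $(-1)^{\chi(\alpha_1>\kappa_1)}+h(\kappa)=0$, hence $h(\kappa)=1$ and $\alpha_1>\kappa_1$. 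But at that point you abandon the direct line and announce an unexecuted case analysis whose goal is to transplant $\kappa$ back into the $\lambda$-decomposition of $\pi$ and contradict the maximality of $\gamma$; you neither carry it out nor explain why $\kappa'\gamma'j\in\B$ (which that plan needs) would hold, and you flag the boundary cases $\alpha=\epsilon$, $\gamma=\epsilon$ as an open obstacle. As it stands, the key step of the theorem is not proved.

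The irony is that the hypotheses you already wrote down are contradictory on their face, so no case analysis is needed: since $\kappa'$ is a prefix of the ballot word $\kappa'\alpha i$, it is itself ballot, so $h(\kappa)=-h(\kappa')\le 0$, which is incompatible with your deduction $h(\kappa)=1$ (equivalently, $\kappa$ ballot and $\kappa'$ ballot force $h(\kappa)=0$, and then the junction term $(-1)^{\chi(\alpha_1>\kappa_1)}=\pm1$ cannot cancel it). This two-line observation is exactly how the paper closes the maximality argument; note in particular that no appeal to the maximality of $\gamma$ in the original $\lambda$-decomposition is needed at all. If you replace your projected case analysis by this observation, your proof becomes complete and essentially identical to the paper's.
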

\begin{proof}
For $\pi=(\alpha,\lambda,\gamma,\delta)\in X_n(\lambda)$, define
$f(\pi)=\gamma'\mu\alpha'\delta$.
We shall show that 
$f(\pi)\in X_n(\mu)$ and 
the $\mu$-decomposition of $f(\pi)$
is $(\gamma', \mu, \alpha', \delta)$, i.e.,
\begin{enumerate}[label=(\arabic*)]
\item\label[itm]{itm:f:B}
$f(\pi)\in\B$;
\item\label[itm]{itm:h=h}
$h(\gamma'\mu\alpha')=h(\mu)$;
\item\label[itm]{itm:c}
$\alpha\mu_{-1}\in\B$; and
\item\label[itm]{itm:longest}
$\alpha'$ is the longest factor of $f(\pi)$ satisfying \cref{itm:c,itm:h=h}.
\end{enumerate}
Note that \cref{itm:f:B,itm:h=h,itm:c} are independent to each other. 
We show \cref{itm:h=h} first for it will be of use in showing \cref{itm:f:B}.

\noindent\cref{itm:h=h}
Since the integers $\lambda_{-1}=j$ and $\mu_1=j-1\notin\A(\gamma)$ are adjacent, 
we obtain
\[
h(\gamma'\mu_1)=h(\gamma'\lambda_{-1})=-h(\lambda_{-1}\gamma).
\]
Since $h(\lambda)=h(\mu)=1$,
we can verify that
\begin{align}
h(\gamma'\mu\alpha')
&=h(\gamma'\mu_1)
+h(\mu)
+h(\mu_{-1}\alpha')\notag\\
&=-h(\lambda_{-1}\gamma)
+(2-h(\lambda))
-h(\alpha\lambda_1)\notag\\
&=2-h(\alpha\lambda\gamma)
=2-h(\lambda)=1.\label{pf:h:gma}
\end{align}

\noindent\cref{itm:f:B}
Since $\pi=(\alpha,\lambda,\gamma,\delta)$, we have $\gamma' \lambda_{-1}\in\B$.
Replacing $\lambda_{-1}=j$ by the adjacent integer $j-1$ gives $\gamma'\mu_1\in\B$.
Since $\mu\in\B$, we find
$\gamma'\mu\in\B$ immediately.
Now, since $\alpha\in\B$,
for any factorization $\alpha'=\rho\tau$, we have
$h(\tau)=-h(\tau')\le 0$. Using \cref{pf:h:gma}, we can deduce that
\[
h(\gamma'\mu\rho)
\ge
h(\gamma'\mu\rho)+h(\tau)
\ge h(\gamma'\mu\alpha')-1
=0.
\]
Hence $\gamma'\mu\alpha'\in\B$.
Since $h(\lambda)=1$, \cref{lem:delta} implies
$\delta\in\B$.
Together with \cref{pf:h:gma}, we conclude $\gamma'\mu\alpha'\delta\in\B$. 

\noindent\cref{itm:c}
It is true since the word $\alpha\mu_{-1}=\alpha\lambda_1$ 
is a prefix of the ballot permutation~$\pi$, and thus a ballot one.

\noindent\cref{itm:longest}
Assume that $\alpha'$
is not the longest factor of $f(\pi)$ satisfying \cref{itm:c,itm:h=h}.
Then~$\delta$ has a nonempty prefix $\rho$ such that $\rho'\alpha\mu_{-1}\in\B$ and 
\[
h(\gamma'\mu\alpha'\rho)=1.
\]
Together with  \cref{pf:h:gma}, we find
\begin{equation}\label{pf:h:ar}
h(\alpha_{-1}'\rho)
=h(\gamma'\mu\alpha'\rho)-h(\gamma'\mu\alpha')=0.
\end{equation}
On the other hand, since $\rho'\alpha\mu_{-1}$ is ballot, so is its prefix $\rho'$.
Since $\delta$ is ballot, so is its prefix $\rho$.
Therefore, the word $\rho$ must be of height $0$,
and $h(\alpha_{-1}'\rho)\in\{\pm1\}$, contradicting \cref{pf:h:ar}.

It remains to show that $f\colon X_n(\lambda)\to X_n(\mu)$ is a bijection.
For $\sigma=(\alpha,\mu,\gamma,\delta)\in X_n(\mu)$, define
$g(\sigma)=\gamma'\lambda\alpha'\delta$.
Same to the above, one may show that $g(\sigma)\in X_n(\lambda)$ 
and the $\lambda$-decomposition of $g(\sigma)$ 
is $(\gamma', \lambda, \alpha', \delta)$.
By definition, we can derive that
\[
gf(\pi)
=g\brk1{(\gamma', \mu, \alpha', \delta)}
=\brk1{(\alpha')',\lambda,(\gamma')',\delta}
=\pi.
\]
Thus the composition $gf$ is the identity on $X_n(\lambda)$.
In the same fashion, one may show that $fg$ is the identity on $X_n(\mu)$.
Hence $f$ is a bijection, with the inverse~$g$.
\end{proof}

Now we are in a position to give the new proof of \cref{thm:b=p}.

\begin{proof}[Proof of \cref{thm:b=p}]
Let $n\ge 4$, $i\ge 1$ and $i+2\le j\le n-1$.
Let $b_n(i,j)$ be the number of ballot permutations in $\B_n$ 
containing the factor $inj$.
Recall that $\lambda=in(j-1)j$ and $\mu=(j-1)jni$.
We claim that
\begin{equation}\label{X:lambda}
\abs{X_n(\lambda)}=b_n(i,\,j-1)-b_n(i,j).
\end{equation}

In fact, consider the involution $\Phi$ of 
exchanging the letters $j-1$ and $j$ on the set
\[
Y_n=\B_n(i,\,j-1)\setminus X_n(\lambda).
\]
We shall show that $\Phi(Y_n)=\B_n(i,j)$, which 
implies \cref{X:lambda} immediately. Let $\pi\in Y_n$.
If the letters $j-1$ and $j$ are not adjacent in $\pi$, 
then $\Phi(\pi)\in\B_n(i,j)$ and
the letters $j-1$ and $j$ are not adjacent in $\Phi(\pi)$.
Suppose that $j-1$ and $j$ are adjacent in $\pi$.
Since $\pi\not\in X_n(\lambda)$,
there is no factor~$\gamma$ satisfying~\cref{cond:omega}.
In other words, the height of any prefix of $\pi$ that 
is longer than $\alpha\lambda$ is at least 2, where $\alpha\lambda$ 
is the prefix of $\pi$ ending at $\lambda$.
Therefore $\Phi(\pi)\in \B_n(i,j)$.
It is clear that the preimage of every permutation $\sigma\in\B_n(i,j)$
lies in $Y_n$.
This proves \cref{X:lambda}.

Similarly, one may show that
\begin{equation}\label{X:mu}
\abs{X_n(\mu)}=b_n(j,i)-b_n(j-1,\,i).
\end{equation}
By \cref{thm:X=Y,X:lambda,X:mu,lem:|i-j|=1}, we infer that
\begin{align*}
b_n(i,j)+b_n(j,i)
&=b_n(i,\,j-1)+b_n(j-1,\,i)\\
&=\dotsb=b_n(i,\,i+1)+b_n(i+1,\,i)=2b_{n-2}.
\end{align*}
Since the number of ballot permutations in $\B_n$
ending with the letter $n$ is~$b_{n-1}$, we obtain
\[
b_n=b_{n-1}+\sum_{i,\,j}b_n(i,j)
=b_{n-1}+(n-1)(n-2)b_{n-2}.
\]
It is trivial to check that $b_1=b_2=1$.
Since the sequence $p_n$ admits the same recurrence and initial values, 
we conclude that $b_n=p_n$. 
\end{proof}

\section{A Toeplitz property}\label{sec:Toeplitz}
Computer calculus gives that the matrices $\brk1{b_n(i,j)}_{i,j}$
for $3\le n\le 8$ are respectively
\[
\left[ \begin {array}{cc} 0&1\\ \noalign{\medskip}1&0\end {array}
\right],\quad
 \left[ \begin {array}{ccc} 0&1&0\\ \noalign{\medskip}1&0&1
\\ \noalign{\medskip}2&1&0\end {array} \right],\quad
 \left[ \begin {array}{cccc} 0&3&2&1\\ \noalign{\medskip}3&0&3&2
\\ \noalign{\medskip}4&3&0&3\\ \noalign{\medskip}5&4&3&0\end {array}
 \right],\quad
\left[ \begin {array}{ccccc} 0&9&6&3&1\\ \noalign{\medskip}9&0&9&6&3
\\ \noalign{\medskip}12&9&0&9&6\\ \noalign{\medskip}15&12&9&0&9
\\ \noalign{\medskip}17&15&12&9&0\end {array} \right],
\]
\[
\left[ \begin {array}{cccccc} 0&45&36&27&19&13\\ \noalign{\medskip}45
&0&45&36&27&19\\ \noalign{\medskip}54&45&0&45&36&27
\\ \noalign{\medskip}63&54&45&0&45&36\\ \noalign{\medskip}71&63&54&45&0
&45\\ \noalign{\medskip}77&71&63&54&45&0\end {array} \right],
\quad\text{and}\quad
\left[ \begin {array}{ccccccc} 0&225&182&139&99&65&38
\\ \noalign{\medskip}225&0&225&182&139&99&65\\ \noalign{\medskip}268&
225&0&225&182&139&99\\ \noalign{\medskip}311&268&225&0&225&182&139
\\ \noalign{\medskip}351&311&268&225&0&225&182\\ \noalign{\medskip}385
&351&311&268&225&0&225\\ \noalign{\medskip}412&385&351&311&268&225&0
\end {array} \right].
\]
A square matrix $(a_{ij})$ is said to be \emph{Toeplitz} if 
$a_{i+1,\,j+1}=a_{i,j}$ for all well defined entries $a_{i+1,\,j+1}$ and~$a_{i,j}$.
In this section, we manage to show that the matrices 
\[
B(n,d)=\brk1{b_{n,d}(i,j)}_{i,j=1}^{n-1}
\quad\text{and}\quad
P(n,d)=\brk1{p_{n,d}(i,j)}_{i,j=1}^{n-1}
\]
are Toeplitz for any number $d$.

\begin{theorem}\label{thm:Toeplitz:B}
The matrix $B(n,d)$ is Toeplitz for all $n$ and $d$.
\end{theorem}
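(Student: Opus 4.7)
The plan is to construct a descent- and ballot-preserving bijection $\Psi\colon \B_{n,d}(i,j) \to \B_{n,d}(i+1,j+1)$. The natural candidate is the involution on $\mathfrak{S}_n$ that simultaneously swaps the value pairs $\{i,i+1\}$ and $\{j,j+1\}$; by construction it turns any factor $inj$ into $(i+1)n(j+1)$.

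First I would carry out a local descent analysis. A swap of values $\{k,k+1\}$ toggles descent at positions $(r, r+1)$ precisely when $\{\pi_r,\pi_{r+1}\} = \{k,k+1\}$. Since for $\pi \in \B_{n,d}(i,j)$ the letter $i$ is locked at position $p-1$ and $j$ at position $p+1$ (with $n$ at position $p$), the only relevant same-pair adjacencies occur at positions $(p-2,p-1)$ when $\pi_{p-2}=i+1$ (condition $L$) and at positions $(p+1,p+2)$ when $\pi_{p+2}=j+1$ (condition $R$). A direct transition count yields
\[
\des(\Psi(\pi))-\des(\pi)=-\chi(L)+\chi(R),
\]
so $\Psi$ is descent-preserving exactly on the ``balanced'' subsets where $L$ and $R$ agree. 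Since $\Psi$ is an involution whose action on positions $p\pm 2$ mirrors the $L,R$ status in the image, one then checks that $\Psi$ restricts to a descent-preserving bijection between the balanced subsets of $\B_{n,d}(i,j)$ and those of $\B_{n,d}(i+1,j+1)$. Ballot preservation in these cases reduces to a height bookkeeping argument: in the ``neither'' case all prefix heights are unchanged, while in the ``both'' case the height change at position $p-2$ is compensated by the one at $p+2$, so no prefix drops below zero.

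The remaining step is to biject the two unbalanced subsets (only $L$ or only $R$) on one side with those on the other. My plan is to combine $\Psi$ with a reversal-based bijection modeled on \cref{thm:X=Y}: a permutation in $\B_{n,d}(i,j)$ satisfying only $L$ contains the local word $(i+1)inj$ around $n$, and one uses its $\omega$-decomposition with a carefully chosen $\omega$ to move the $L$-adjacency onto the opposite side, producing a permutation in $\B_{n,d}(i+1,j+1)$ satisfying only $R$. The main obstacle lies exactly here: the reversal introduces its own descent change by interchanging ascents and descents within the reversed factors, and one must calibrate the choice of $\omega$ so that this effect, together with the $\pm 1$ shift from the $L$/$R$ imbalance, cancels out, while the ballot property is maintained through a height analysis analogous to \cref{lem:delta}.
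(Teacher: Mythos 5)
Your ``balanced'' analysis is sound as far as it goes: swapping the value pairs $\{i,i+1\}$ and $\{j,j+1\}$ only disturbs descents at the two adjacencies you call $L$ and $R$, and on the subsets where $L$ and $R$ agree your $\Psi$ is indeed a descent- and ballot-preserving bijection onto the corresponding balanced subsets of $\B_{n,d}(i+1,\,j+1)$ (the height bookkeeping in the ``both'' case is correct, since heights only increase between the two toggled positions). Two problems remain, one minor and one fatal. The minor one: when $\abs{i-j}=1$ the two swapped pairs overlap and $\Psi$ is not even defined; this case can be disposed of separately via \cref{lem:|i-j|=1}, but you should say so. The fatal one is exactly the step you flag yourself: the unbalanced permutations (only $L$ or only $R$ holds). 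You give no construction there, only a plan to adapt the reversal map of \cref{thm:X=Y}, and that plan is unlikely to work as stated, because that map reverses the factors $\alpha$ and $\gamma$ and reversal interchanges ascents and descents, so it does not preserve $\des$; in the paper it is used only for the unrefined count $b_n$, not for $b_{n,d}$. Moreover there is no a priori reason the ``only $L$'' subset of $\B_{n,d}(i,j)$ and the ``only $R$'' subset of $\B_{n,d}(i+1,\,j+1)$ (or any other pairing of unbalanced subsets) are equinumerous, so matching them requires a genuinely new idea, not just calibration.

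For comparison, the paper's proof resolves precisely this difficulty by making the ``obstruction'' part of the object being moved: it defines the lower width $l_\pi$ as the length of the maximal run of consecutive letters $\overline{(m;l)}$ attached next to the factor $inj$ (your condition $L$ or $R$ is the statement $l_\pi\ge1$, and longer runs can occur, which your single swap never sees), forms the lower core $\kappa_\pi$ containing $n$ together with this run, replaces $\kappa_\pi$ by a shifted core $T|_{\kappa_\pi}$, and then order-preservingly straightens the remaining letters of $[m,\,M+1]$. Descent preservation follows because $\kappa_\pi$ and $T|_{\kappa_\pi}$ have the same number of descents and the boundary comparisons are preserved, and the ballot property follows because the unique ascent (resp.\ descent) inside the core only moves in the favorable direction; injectivity is seen via the dual notion of upper width and upper core. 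So your proposal reproduces the easy ($l_\pi=0$-type) instances of the paper's map but is missing the core-and-straightening mechanism that handles the adjacency runs, which is the heart of the theorem; as submitted it is an incomplete argument.
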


\begin{proof}
Suppose that $n\ge 4$, $1\le d\le \floor{(n-1)/2}$, and $i, j\in[n-2]$. 
This proof is organized as follows. First we give necessary notion and notation 
to define a map~$T$ between the sets $\B_{n,d}(i,j)$ and $\B_{n,d}(i+1,\,j+1)$.
Second we interpret some facts implicitly contained in the definition.
We then make some efforts to show that~$T$ is well defined. 
Finally we prove that $T$ is bijective, which will complete the proof.

Let $m=\min(i,j)$ and $M=\max(i,j)$.
We introduce the notation
\[
\overline{x}=\begin{cases}
M+1,&\text{if $x=M$},\\[3pt]
x,&\text{otherwise},
\end{cases}
\quad\text{and}\quad
\underline{x}=\begin{cases}
m,&\text{if $x=m+1$},\\[3pt]
x,&\text{otherwise},
\end{cases}
\]
and define the words
\[
\overline{(m;l)}=\overline{m(m+1)\dotsm(m+l-1)}
\quad\text{and}\quad
\underline{(M+1;\,l)}=
\underline{(M+1)M\dotsm(M-l+2)}.
\]
In particular, $\overline{(m;0)}=\underline{(M+1;\,0)}=\epsilon$.
For $\pi\in \B_{n,d}(i,j)$, 
define the \emph{lower width}~$l_\pi$ to be the length~$l$ of 
the longest factor of the form $\overline{(m;l)}$ 
or $\overline{(m;l)}'$, if the letters~$M$ and $M+1$ are not adjacent in $\pi$; 
and define $l_\pi=0$ if $M$ and $M+1$ are adjacent.
Note that $l_\pi\le M-m+1$
since the letter $M+1$ occurs twice in the word $\overline{(m;\,M-m+2)}$.
Define the \emph{lower core} of $\pi$ to be the word
\begin{equation}\label{def:kappa:pi}
\kappa_\pi=\begin{cases}
\overline{(i;\,l_\pi)}'nj,&\text{if $i<j$},\\[3pt]
in\overline{(j;\,l_\pi)},&\text{if $i>j$}.
\end{cases}
\end{equation}
Then $\kappa_\pi$ is a factor of $\pi$ of length $l_\pi+2$.
For example, 
\[
\kappa_{382549671}=96
\quad\text{and}\quad		
\kappa_{134875962}=7596.
\]
Define a map 
\[
T\colon \B_{n,d}(i,j)\to\B_{n,d}(i+1,\,j+1)
\]
by firstly replacing the lower core $\kappa_\pi$ by the word
\begin{equation}\label{def:T:kappa:pi}
T|_{\kappa_\pi}=\begin{cases}
(i+1)n\underline{(j+1;\,l_\pi)},&\text{if $i<j$},\\[3pt]
\underline{(i+1;\,l_\pi)}'n(j+1),&\text{if $i>j$},
\end{cases}
\end{equation}
and secondly substituting the letter~$\overline{x}$ 
by the letter $\underline{x-l_\pi+1}$ 
for each element $x\in[m+l_\pi,\,M]\backslash\A(\kappa_\pi)$. 
For example, 
\[
T(382549671)=382645971
\quad\text{and}\quad
T(134875962)=134869752.
\]

We call the first operation the \emph{core replacement}, and the second
the \emph{straightening}.
We will show that $T$ is well defined, i.e., 
$T(\pi)\in \B_{n,d}(i+1,\,j+1)$ for any $\pi\in\B_{n,d}(i,j)$, 
and that $T$ is bijective.
From definition, we observe that the map $T$ operates in the following 3 steps:
\begin{enumerate}
\item
The core replacement is length-preserving and position-preserving, 
namely the words $T|_{\kappa_\pi}$ and $\kappa_\pi$ have the same length 
and the same position.
Every letter in the set 
\[
\A(\kappa_\pi)=\{\ovU{m},\,\ovU{m+1},\,\dots,\,\ovD{m+l_\pi-1},\,M,\,n\}
\]
is replaced by a letter in the set 
\begin{equation}\label{pf:A:T:kappa}
\A\brk1{T|_{\kappa_\pi}}
=\{m+1,\,\underline{M-l_\pi+2},\,\underline{M-l_\pi+3},
\,\dots,\,\underline{M+1},\,n\}.
\end{equation}
In particular, the letter $n$ is contained in both words 
$\kappa_\pi$ and $T|_{\kappa_\pi}$. 
\item
The straightening operation maps the letters in the set 
\[
[m,\,M+1]\backslash\A\brk1{\kappa_\pi}
=\brk[c]1{\ovD{m+l_\pi},\,\ovD{m+l_\pi+1},\,\dots,\,\overline{M}}
\]
to the letters in the set 
\[
[m,\,M+1]\backslash\A\brk1{T|_{\kappa_\pi}}
=\brk[c]1{\underline{m+1},\,\underline{m+2},\,\dots,\,\underline{M-l_\pi+1}}
\]
in the order-preserving manner, where $\overline{M}=M+1$ and $\underline{m+1}=m$.
\item
The letters that is not dealt with in the first two steps constitute
the union $[m-1]\cup[M+2,\,n-1]$, and are unchanged by $T$.
\end{enumerate}
From the above observations we can infer that $T(\pi)\in\mathfrak{S}_n$.
Fix $\pi\in \B_{n,d}(i,j)$. 
Then~$\pi$ induces a unique bijection 
$T_\pi\colon[n]\to[n]$ which maps the letter~$\pi_i$
to the letter $T(\pi)_i$ for all $i\in[n]$.
Note that the preimage of every letter in the interval $[m,\,M+1]$ under~$T_\pi$
must be in the same set $[m,\,M+1]$.

Now, we show that $T(\pi)\in \B_{n,d}(i+1,\,j+1)$. 
More precisely, we need to show 
\begin{enumerate}[label=(\roman*)]
\item\label[itm]{itm:pf:i+1:j+1}
the word $T(\pi)$ contains the factor $(i+1)n(j+1)$;
\item\label[itm]{itm:pf:des=des}
$\des\brk1{T(\pi)}=\des(\pi)$; and
\item\label[itm]{itm:pf:T:B}
$T(\pi)\in\B$.
\end{enumerate}

Suppose that $l_\pi=0$.
Then the $\kappa_\pi\in\{nM,\,Mn\}$,
and $\pi$ contains the factor 
\[
\iota=mnM(M+1)
\]
or its reversal. 
From definition of the core replacement,
$T|_{\kappa_\pi}$ is the factor $(m+1)n$ or its reversal.
Since the letter $m$ (resp., $M+1$) is the smallest (resp., largest)
one in the interval $[m,\,M+1]$,
it is invariant in the order-preserving straightening.
Therefore, the image $T(\pi)$ contains the factor 
\[
T|_\iota=m(m+1)n(M+1)
\]
or its reversal. This proves \cref{itm:pf:i+1:j+1}.
Note that each of the words 
$\iota$ and $T|_\iota$
has a unique descent, 
and the reversal operation exchanges descents and ascents.
Since the straightening is order-preserving,
the map $T$ preserves all descents 
and ascents that has empty intersection with the cores. As a result, 
we obtain \cref{itm:pf:des=des}. Moreover,
the unique descent in $T|_\iota$ appears later 
than the unique descent appears in $\iota$, 
and the unique ascent in $T|_\iota'$ appears earlier than that in $\iota$.
Since $\pi$ is ballot, so is~$T(\pi)$.
This proves \cref{itm:pf:T:B} and $T(\pi)\in \B_{n,d}(i+1,\,j+1)$ for $l_\pi=0$.

Below we can suppose that $l_\pi\ge 1$. 
Then \cref{itm:pf:i+1:j+1} is clear from the definition of the core replacement.
In order to show \cref{itm:pf:des=des}, 
we claim the equivalence
\begin{equation}\label[eqrl]{pf:eqrl}
u<v\iff T_\pi(u)<T_\pi(v)
\end{equation}
for any adjacent letters $u$ and $v$ such that $\{u,v\}\not\subseteq\A(\kappa_\pi)$.
Since the straightening is order-preserving,
\cref{pf:eqrl} is true if $\{u,v\}\cap\A(\kappa_\pi)=\emptyset$. 
Below we can suppose that 
\[
v\in\{M,\,\ovD{m+l_\pi-1}\}\subseteq\A(\kappa_\pi)
\quad\text{and}\quad
u\not\in\A(\kappa_\pi).
\]

\smallskip

If $v=\ovD{m+l_\pi-1}$,
then $T_\pi(v)=m+1$ by the core replacement.
Since $u\not\in\A(\kappa_\pi)$, the letter $u$ is not mapped in the core replacement.
If $T_\pi(u)=m$, then the letter~$u$ is mapped in the straightening operation.
Since $m$ is the smallest image in the straightening, 
its preimage $u$ must be the smallest element in the straightening, 
that is, $u=T_\pi^{-1}(m)=\ovD{m+l_\pi}$.
Now, the elements $\ovD{m+l_\pi}=u$ and $\ovD{m+l_\pi-1}=v$ are adjacent,
contradicting the definition of the lower width $l_\pi$. Therefore,  $T_\pi(u)\ne m$.
We then obtain the equivalence
\[
T_\pi(u)<T_\pi(v)\iff
T_\pi(u)\le m-1.
\]
On the other hand, 
by the premise $u\not\in\A(\kappa_\pi)$ and $v\in\A(\kappa_\pi)$,
we deduce the equivalence
\[
u<v\iff
u\le \overline{m}-1=m-1.
\]
Since all letters in the set $[m-1]$ are fixed under $T_\pi$, 
we obtain the desired \cref{pf:eqrl} by combining 
the above two equivalences.

\smallskip

If $v=M$, then $T_\pi(v)=\underline{M-l_\pi+2}$.
Since $l_\pi\ne0$,
we find $u\ne M+1$. Since $u\not\in\A(\kappa_\pi)$, we obtain the equivalence
\[
u>v\iff 
u\in[M+2,\,n-1].
\]
On the other hand, since $u\not\in\A(\kappa_\pi)$, we deduce 
$T_\pi(u)\not\in\A\brk1{T(\kappa_\pi)}$.
In view of \cref{pf:A:T:kappa}, we obtain the equivalence
\[
T_\pi(u)>T_\pi(v)\iff
T_\pi(u)\in[M+2,\,n-1].
\]
Since the map $T_\pi$ restricted on the set $[M+2,\,n-1]$ is the identity, 
we obtain \cref{pf:eqrl} as desired.

\smallskip

By \cref{pf:eqrl}, the map $T$ preserves all descents and ascents
that are not entirely contributed by the lower core. 
For those entirely contained in the lower core, 
we note both the numbers of descents of the words $\kappa_\pi$ 
and~$T|_{\kappa_\pi}$ equal the lower width $l_\pi$.
Therefore, 
$T$ preserves the total number of descents.
This proves \cref{itm:pf:des=des}.

\smallskip

For \cref{itm:pf:T:B}, we observe that in the core replacement, either 
\begin{itemize}
\item
both the preimage $\kappa_\pi$ and its image $T|_{\kappa_\pi}$
contain a unique ascent,
and the ascent in $T|_{\kappa_\pi}$ 
appears earlier than the ascent in~$\kappa_\pi$ appears, or
\item
both of them contain a unique descent,
and the decent in $T|_{\kappa_\pi}$ 
appears later than the descent in $\kappa_\pi$ appears.
\end{itemize}
Since $T$ preserves all other descents and ascents,
the premise $\pi\in\B$ implies \cref{itm:pf:T:B}.
This confirms that $T(\pi)\in \B_{n,d}(i+1,\,j+1)$.

\smallskip

It remains to show that $T$ is bijective.
We define the \emph{upper width} $l_\sigma'$ to be the length $l$
of the longest factor of the form $\underline{(M+1;\,l)}$ 
or $\underline{(M+1;\,l)}'$,
if the letters $m$ and $m+1$ are not adjacent in $\sigma$; 
and define $l_\sigma'=0$ otherwise.
Define the \emph{upper core}
of $\sigma$ to be the word
\[
\rho_\sigma=\begin{cases}
(i+1)n\underline{(j+1;\,l_\sigma')},&\text{if $i<j$},\\[3pt]
\underline{(i+1;\,l_\sigma')}'n(j+1),&\text{if $i>j$}.
\end{cases}
\]
Define a map 
\[
T'\colon \B_{n,d}(i+1,\,j+1)\to B_{n,d}(i,j)
\]
as follows.
For $\sigma\in \B_{n,d}(i+1,\,j+1)$, 
define $T'(\sigma)$ to be the permutation obtained 
from $\sigma$ by firstly replacing its upper core by
\[
T'|_{\rho_\sigma}
=\begin{cases}
\overline{(i;\,l_\sigma')}'nj,&\text{if $i<j$},\\[3pt]
in\overline{(j;\,l_\sigma')},&\text{if $i>j$},
\end{cases}
\]
and then replacing each letter in the set $[m,\,M+1]\backslash\A(\rho_\sigma)$
to a letter in the set $[m,\,M+1]\backslash\A(T'|_{\rho_\sigma})$
in the order-preserving manner. 

We shall show that
\begin{equation}\label{dsr:l=l}
l_\pi=l_{T(\pi)}'.
\end{equation}

In fact, if $l_\pi=0$, then $\pi$ contains the factor
$\iota$ or its reversal. 
It follows that $\sigma$ contains the factor~$T|_\iota$ or its reversal. 
Thus $l_\sigma'=0=l_\pi$.
Suppose that $l_\pi\ge 1$. 
Then $m+1$ and $m$ are not adjacent in $\sigma$, since otherwise
the letters 
\[
\ovD{m+l_\pi-1}=T_\pi^{-1}(m+1)
\quad\text{and}\quad
\ovD{m+l_\pi}=T_\pi^{-1}(m)
\]
would be adjacent in $\pi$, which
contradicts the definition of $l_\pi$. 
Thus $l_\sigma'\ge l_\pi$ from the definition of the core replacement.
Furthermore, the letters $\underline{M-l_\pi+2}$ and $\underline{M-l_\pi+1}$
are not adjacent in $\sigma$, since otherwise the letters 
\[
M=T_\pi^{-1}\brk1{\underline{M-l_\pi+2}}
\quad\text{and}\quad
M+1=T_\pi^{-1}\brk1{\underline{M-l_\pi+1}}
\]
would be adjacent in $\pi$, which contradicts $l_\pi\ge 1$.
This proves \cref{dsr:l=l}.

In the same fashion one may prove that $l_\sigma'=l_{T'(\sigma)}$.
As a consequence, both compositions $TT'$ and~$T'T$ are identities. 
This completes the whole proof.
\end{proof}

We remark that the map $T_\pi$ 
can be represented letter-wise using a piecewise function,
with the aid of the number $l_\pi$.

Note that the key notion of lower and upper core widths, 
and that of lower and upper cores,
are defined in a local structure of a permutation.
In fact, the map~$T$ looks for the 
longest factor consisting of discretely continuous numbers 
starting from $m$.
This localness perspective inspires us 
to translate the idea application from permutations to certain cyclic words,
especially to each cycle of a permutation in its cycle representation, 
and obtain \cref{thm:Toeplitz:P}.

\begin{theorem}\label{thm:Toeplitz:P}
The matrix $P(n,d)$ is symmetric and Toeplitz for all $n$ and $d$.
\end{theorem}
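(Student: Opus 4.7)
The plan is to split the statement into a symmetry part and a Toeplitz part, handled respectively by cycle inversion and by an adaptation of the bijection $T$ from the proof of \cref{thm:Toeplitz:B} to cyclic factors. As suggested in the paragraph following that proof, the notions of core and width are local and hence essentially insensitive to whether the underlying word is linear or cyclic, so the construction should port with only cosmetic changes.

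\textbf{Symmetry.} For $\pi\in\P_{n,d}(i,j)$, the letter $n$ lies in a unique cycle $c$, and $c$ contains the cyclic factor $inj$. I would replace $c$ by its inverse cycle $c^{-1}$ and leave all other cycles unchanged, obtaining a permutation $\pi^*$. Reversing a cycle exchanges its cyclic descents with its cyclic ascents, so $w(c^{-1})=\min(\casc(c),\cdes(c))=w(c)$ and $w(\pi^*)=d$; moreover $c^{-1}$ contains the cyclic factor $jni$, so $\pi^*\in\P_{n,d}(j,i)$. This involution establishes $p_{n,d}(i,j)=p_{n,d}(j,i)$.

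\textbf{Toeplitz.} For $\pi\in\P_{n,d}(i,j)$ let $c$ be the cycle containing the cyclic factor $inj$. I would define the cyclic lower width $l_\pi$ and cyclic lower core $\kappa_\pi$ by mirroring \cref{def:kappa:pi} with ``factor'' replaced throughout by ``cyclic factor of $c$'', setting $l_\pi=0$ if $M$ and $M+1$ are not cyclically adjacent in $c$. Then I would define
\[
T\colon\P_{n,d}(i,j)\to\P_{n,d}(i+1,j+1)
\]
by replacing $\kappa_\pi$ with $T|_{\kappa_\pi}$ exactly as in \cref{def:T:kappa:pi} and straightening the remaining letters of $[m+l_\pi,M]\setminus\A(\kappa_\pi)$ occurring elsewhere in $c$. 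Letters outside $[m,M+1]$ and cycles other than $c$ stay fixed, so $T(\pi)$ is an odd order permutation with the same cycle type as $\pi$ and containing the cyclic factor $(i+1)\,n\,(j+1)$.

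\textbf{Main obstacle: cyclic weight preservation.} The critical step is to show $w(T(c))=w(c)$. The equivalence \cref{pf:eqrl} for cyclically adjacent pairs of letters carries over from the ballot proof verbatim, since its derivation is purely local and never invokes the global ballot property of $\pi$. Together with the direct observation that $\kappa_\pi$ and $T|_{\kappa_\pi}$ each contribute exactly $l_\pi$ descents to their cycles, this gives $\cdes(T(c))=\cdes(c)$; since the cycle length is unchanged, $\casc(T(c))=\casc(c)$ as well, hence $w(T(c))=w(c)$ and $w(T(\pi))=d$. There is no global ballot condition to maintain, so the analogue of \cref{itm:pf:T:B} is vacuous, which is what makes the cyclic version somewhat easier than the linear one. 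Bijectivity then follows by defining a cyclic upper width $l'_\sigma$ and upper core $\rho_\sigma$ on $\P_{n,d}(i+1,j+1)$ and constructing the reverse map $T'$ as in the ballot case; the equalities $l_\pi=l'_{T(\pi)}$ and $l'_\sigma=l_{T'(\sigma)}$ hold by the same adjacency argument as in \cref{dsr:l=l}, localized to $c$, so $T'T$ and $TT'$ are identities and $T$ is a bijection.
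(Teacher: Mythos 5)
Your overall strategy is the same as the paper's: symmetry via cycle reversal (the paper reverses every cycle, you reverse only the cycle containing $n$; both work), and the Toeplitz property via a cyclic adaptation of the map $T$ from \cref{thm:Toeplitz:B}. However, as written your construction of $T$ is not well defined. You restrict the straightening to letters ``occurring elsewhere in $c$'' and declare that ``cycles other than $c$ stay fixed,'' but the letters of $[m,\,M+1]\setminus\A(\kappa_\pi)$ need not lie in the cycle $C_\pi$ at all. The core replacement changes the letter set $\A(\kappa_\pi)$ into $\A\brk1{T|_{\kappa_\pi}}$, so if you leave the other cycles untouched the result is not a permutation of $[n]$ (some letters appear twice, others disappear). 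The straightening must relabel \emph{all} letters of $[m,\,M+1]\setminus\A(\kappa_\pi)$ order-preservingly, wherever they occur; this is exactly the point the paper stresses (``the straightening works on letters that are both inside and outside $C_\pi$''), and its example $\pi=(1,6,8,2,10)(3,12,9,11,7,5,4)$, where the cycle $(1,6,8,2,10)$ becomes $(1,3,6,2,7)$, shows other cycles genuinely change. Once the straightening is made global, one must also check (as the paper does) that the cyclic descent numbers of the other cycles, and of the adjacencies of $C_\pi$ meeting the core in one letter, are preserved; your \cref{pf:eqrl} argument does cover this, since the relabeling is order-preserving and maps $[m,\,M+1]$ into itself.

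Second, your weight-preservation argument has a gap in the special case $C_\pi=(\kappa_\pi)$, i.e.\ when the cycle containing $n$ is exactly the lower core. There the wrap-around adjacency joins the first and last letters of $\kappa_\pi$, both of which lie in $\A(\kappa_\pi)$, so \cref{pf:eqrl} does not apply to it, and the count of descents of the linear word $\kappa_\pi$ (which is $l_\pi$) does not determine the cyclic descent number of the cycle (which is $l_\pi+1$ here). The paper treats this case separately: it first shows $1\le l_\pi\le M-m$ (the lower bound because $\abs{C_\pi}$ is odd, the upper bound because otherwise $M$ and $M+1$ would be cyclically adjacent, forcing $l_\pi=0$), and then verifies directly that $\casc(\kappa_\pi)=\casc(T|_{\kappa_\pi})=1$ when $i<j$ (resp.\ $\cdes=1$ when $i>j$), so the cyclic weight is preserved. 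Your sketch needs this extra check; with it, and with the globally applied straightening, the rest of your argument (including the inverse map via the upper core and the analogue of \cref{dsr:l=l}) matches the paper's proof.
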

\begin{proof}
In this proof, we keep in mind that permutations are considered to be
unions of cycles, and forget their nature as maps on the set $[n]$.
Since taking reversal cycle-wise 
is an involution between the sets $\P_{n,d}(i,j)$ and $\P_{n,d}(j,i)$,
the matrix $P(n,d)$ is symmetric.
Let $n\ge 4$ and $1\le d\le \floor{(n-1)/2}$. 
In order to show the Toeplitz property, namely $P(n,d)_{i,j}=P(n,d)_{i+1,\,j+1}$,
it suffices to consider $1\le i<j\le n-2$ by virtue of the symmetry.

For $\pi\in \P_{n,d}(i,j)$,
let $C_\pi$ be the cycle of $\pi$ containing the maximum letter $n$. 
Paralleling the notion of lower width by considering 
cyclic factors instead of factors, we define the lower width $l_\pi$ 
to be the length $l$ of the longest \emph{cyclic factor} of $C_\pi$ that is
of the form $\overline{(m;l)}$ or $\overline{(m;l)}'$,
if the letters $M$ and $M+1$ are not adjacent in~$C_\pi$; 
and define $l_\pi=0$ otherwise.
The subsequent notion of the lower core and core replacement 
are exactly the same as those in the proof of \cref{thm:Toeplitz:B},
using the cyclic version of the lower width.
Define the map 
\[
T\colon\P_{n,d}(i,j)\to\P_{n,d}(i+1,\,j+1)
\]
by running firstly the core replacement and then the straightening.

While the core replacement affects letters only inside the cycle $C_\pi$,
the straightening works on letters that are both inside and outside~$C_\pi$.
Same to the proof of \cref{thm:Toeplitz:B}, we can deduce the following one by one:
\begin{enumerate}[label=(\roman*)]
\item
the width $l_\pi\le M-m+1$, where $M=\max(i,j)$ and $m=\min(i,j)$;
\item
the image $T(\pi)$ is a permutation of length~$n$, 
and contains the cyclic factor $(i+1)n(j+1)$;
\item
the map $T$ preserves the length of each cycle of~$\pi$, and
the permutation~$T(\pi)$ is of odd order;
\item
the map $T$ preserves the cyclic descents outside $\kappa_\pi$,
and thus preserves the cyclic descent number of each cycle except $C_\pi$;
\item
the map $T$ preserves the cyclic descent number inside the core $\kappa_\pi$,
and the cyclic descents formed by one letter inside $\kappa_\pi$
and the other letter outside~$\kappa_\pi$; 
thus $T$ preserves the cyclic descent number of $C_\pi$ if $C_\pi\ne(\kappa_\pi)$;
\item\label[itm]{itm:pf:C=kappa}
the map $T$ preserves the cyclic descent number of $C_\pi$ if $C_\pi=(\kappa_\pi)$.
\end{enumerate}
In conclusion, the map $T$ preserves the cyclic descent number of each cycle,
thus preserves the cyclic weight $d$ of $\pi$.

We shall show \cref{itm:pf:C=kappa} in detail.
Suppose that $C_\pi=(\kappa_\pi)$. We claim that $1\le l_\pi\le M-m$.
In fact, if $l_\pi=0$, then $\abs{C_\pi}=\abs{\kappa_\pi}=2$, 
contradicting that $\abs{C_\pi}$ is odd. If $l_\pi=M-m+1$, 
then the letter $M+1=\underline{m+l_\pi}$ is adjacent to the letter $M$,
and $l_\pi=0$, a contradiction. This proves the claim.
Now the image $T(\pi)$ contains the cycle $\brk1{T|_{\kappa_\pi}}$.
As a consequence, one finds
\begin{itemize}
\item
if $i<j$, then $\casc(\kappa_\pi)=1=\casc(T|_{\kappa_\pi})$;
\item
if $i>j$, then $\cdes(\kappa_\pi)=1=\cdes(T|_{\kappa_\pi})$.
\end{itemize}
This proves \cref{itm:pf:C=kappa} and completes the proof.
\end{proof}

For example, if
$\pi=(1,6,8,2,10)(3,12,9,11,7,5,4)\in\P_{12,4}(3,9)$,
then 
\[
C_\pi=(3,12,9,11,7,5,4),\ 
l_\pi=3,\ 
\kappa_\pi=(5,4,3,12,9)\ \text{and}\ 
T|_{\kappa_\pi}=(4,12,10,9,8).
\]
The straightening maps the letters in 
\[
[3,10]\backslash\A(\kappa_\pi)=\{6,7,8,10\}
\]
to the letters in 
\[
[3,10]\backslash\A\brk1{T|_{\kappa_\pi}}=\{3,5,6,7\}
\]
in the order-preserving manner. As a result, we obtain
\[
T(\pi)=(1,3,6,2,7)(10,9,8,11,5,4,12)\in\P_{12,4}(4,10).
\]

We propose the following refinement conjecture of \cref{conj:Spiro}.

\begin{conjecture}\label{conj:nd1j}
$b_{n,d}(1,j)+b_{n,d}(j,1)=2p_{n,d}(1,j)$.
\end{conjecture}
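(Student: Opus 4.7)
The plan is to proceed by strong induction on $n$, assuming \cref{conj:nd1j} for all smaller $n$. The key observation is that this induction hypothesis already yields Spiro's Conjecture at all smaller parameters: combining the Toeplitz property of $B(n,d)$ with the decomposition of ballot permutations according to the neighbours of the letter $n$ yields $b_{n,d} = b_{n-1,d} + \sum_{j=2}^{n-1}(n-j)\brk1{b_{n,d}(1,j)+b_{n,d}(j,1)}$, and the same argument applied to $P(n,d)$, using Toeplitz together with the symmetry in \cref{thm:Toeplitz:P}, gives $p_{n,d} = p_{n-1,d} + 2\sum_{j=2}^{n-1}(n-j)\,p_{n,d}(1,j)$; if the summands match for every $j$, so do the totals. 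Note also that \cref{thm:Toeplitz:P} rewrites the right-hand side $2p_{n,d}(1,j)$ as $p_{n,d}(1,j)+p_{n,d}(j,1)$, so the conjecture really asserts a matching of two pair-sums.

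The first move is the base case $j=2$. Here $\abs{1-2}=1$, so \cref{lem:|i-j|=1} gives $b_{n,d}(1,2)=b_{n,d}(2,1)=b_{n-2,d-1}$ and $p_{n,d}(1,2)=p_{n-2,d-1}$, and the identity reduces to $b_{n-2,d-1}=p_{n-2,d-1}$, which is Spiro's Conjecture at $(n-2,d-1)$ and is supplied by the induction hypothesis. The trivial edges $d=0$ and $d>\floor{(n-1)/2}$ are immediate. I would next attack the opposite extreme $j=n-1$ by direct analysis: the ballot permutations containing $1n(n-1)$ or $(n-1)n1$, and the odd-order permutations whose cycle through $n$ has cyclic neighbours $\{1,n-1\}$, are both highly constrained, and the structure in that case should hint at the mechanism for general $j$.

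For general $j \ge 3$, the plan is to construct an explicit bijection $\Psi\colon \B_{n,d}(1,j)\sqcup\B_{n,d}(j,1) \longrightarrow \P_{n,d}(1,j)\sqcup\P_{n,d}(j,1)$ crossing from the linear (ballot) world to the cyclic (odd-order) world. A natural starting point is the $\omega$-decomposition of \cref{sec:reproof}: for $\pi=(\alpha,\omega,\gamma,\delta)$ with $\omega\in\{1nj,\,jn1\}$, one could try to convert the four-tuple into cycle data by a cycle-lemma-type sweep, placing $n$ on a cycle whose cyclic neighbours are $1$ and $j$ and whose contribution to the cyclic weight matches the descent number carried by the relevant portion of $\pi$. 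The prefix-height condition should correspond, under this conversion, to an odd-length condition on the resulting cycles plus the cyclic-weight count.

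The main obstacle is precisely this cross-class step. All bijections displayed so far preserve the class of their input: the maps of \cref{sec:Toeplitz} are ballot-to-ballot for $B(n,d)$ and cyclic-to-cyclic for $P(n,d)$, and the bijection in \cref{thm:X=Y} is again ballot-to-ballot. By contrast, \cref{conj:nd1j} is a refined assertion linking the two classes at the level of the neighbours of $n$, so bridging a linear descent statistic to an intrinsically cyclic weight statistic is where I expect the real difficulty to lie. Should a direct bijection prove recalcitrant, a fallback is to match recurrences: refine the neighbours-of-$n$ decomposition by also tracking the neighbours of $n-1$ (or, dually, of $1$ and $2$), obtaining finer recurrences on both sides that can be compared by a double induction on $n$ and $d$, with $j=2$ and $j=n-1$ as anchors.
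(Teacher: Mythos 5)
Your proposal does not establish the statement, and there is no paper proof to measure it against: \cref{conj:nd1j} is stated in the paper as a conjecture, and the paper itself only proves special cases (the values $d=0$ and $d=1$, and the conditional implication that the case $j=2$ implies the case $j=3$ in \cref{sec:easy}). The bookkeeping in your plan is sound and runs parallel to the paper's own reduction showing that \cref{conj:nd1j} implies \cref{conj:Spiro}: the Toeplitz property of \cref{thm:Toeplitz:B} collapses $\sum_{i\ne j}b_{n,d}(i,j)$ to the weighted first row and column, the symmetry in \cref{thm:Toeplitz:P} rewrites $2p_{n,d}(1,j)$ as $p_{n,d}(1,j)+p_{n,d}(j,1)$, and your base case $j=2$ correctly reduces, via \cref{lem:|i-j|=1}, to $b_{n-2,\,d-1}=p_{n-2,\,d-1}$, which is available under your strong induction hypothesis by the paper's implication argument.

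The genuine gap is the inductive step for $3\le j\le n-1$ at size $n$: the cross-class bijection $\Psi\colon \B_{n,d}(1,j)\sqcup\B_{n,d}(j,1)\to\P_{n,d}(1,j)\sqcup\P_{n,d}(j,1)$ is only named, never constructed. You yourself identify this as the main obstacle, and none of the suggested mechanisms (converting an $\omega$-decomposition with $\omega\in\{1nj,\,jn1\}$ into cycle data by a cycle-lemma sweep, or refined recurrences tracking the neighbours of $n-1$) is carried out or shown to preserve the statistics in question. Every tool actually available in the paper stays within a single class and preserves its own statistic: the map of \cref{thm:X=Y} and the maps $T$, $T'$ of \cref{sec:Toeplitz} are ballot-to-ballot or cyclic-to-cyclic, whereas your $\Psi$ must transport the linear descent count together with the prefix-height (ballot) constraint onto the cyclic weight together with the odd-cycle-length constraint. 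Without that construction, or a matching pair of refined recurrences with anchors beyond $j=2$, the induction never closes; the proposal is a research plan rather than a proof, which is consistent with the statement remaining open in the paper.
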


\begin{theorem}
\cref{conj:Spiro} is true if \cref{conj:nd1j} is true.
\end{theorem}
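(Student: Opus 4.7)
The plan is to induct on $n$ with $d$ arbitrary. The base cases $n\le 2$ are trivial since $d$ must equal $0$ and $b_{n,0}=p_{n,0}=1$, counted by the identity alone. For the inductive step, I decompose $\B_{n,d}$ and $\P_{n,d}$ according to the role played by the letter $n$. A ballot permutation cannot begin with $n$, since $\pi_1\pi_2=n\pi_2$ would create an initial descent and violate the prefix condition; hence every $\pi\in\B_{n,d}$ either ends with $n$ or contains a unique factor $inj$ with $i,j\in[n-1]$ and $i\ne j$. The first class is in descent-preserving bijection with $\B_{n-1,d}$ obtained by deleting $n$, and the second class is exactly enumerated by $b_{n,d}(i,j)$. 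The analogous decomposition for $\P_{n,d}$ splits according to whether $n$ is a fixed point of $\pi$ or lies in an odd cycle of length at least~$3$; in the latter case the two cyclic neighbors of $n$ are automatically distinct because $\P_n$ admits no $2$-cycles. This yields
\[
b_{n,d}=b_{n-1,d}+\sum_{\substack{i,j\in[n-1]\\ i\ne j}}b_{n,d}(i,j)
\quad\text{and}\quad
p_{n,d}=p_{n-1,d}+\sum_{\substack{i,j\in[n-1]\\ i\ne j}}p_{n,d}(i,j).
\]

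Next I would collapse each double sum onto the first-row and first-column entries using the Toeplitz property of \cref{thm:Toeplitz:B,thm:Toeplitz:P}. Since $b_{n,d}(i,j)$ depends only on the shift $s=j-i$, and each shift with $|s|=k\in[1,n-2]$ is realized by exactly $n-1-k$ pairs $(i,j)\in[n-1]^2$ in each direction, regrouping gives
\[
\sum_{i\ne j}b_{n,d}(i,j)=\sum_{k=1}^{n-2}(n-1-k)\brk1{b_{n,d}(1,k+1)+b_{n,d}(k+1,1)},
\]
and the same identity holds with $b$ replaced by $p$. The extra symmetry $p_{n,d}(k+1,1)=p_{n,d}(1,k+1)$ supplied by \cref{thm:Toeplitz:P} turns the $p$-sum into $\sum_{k=1}^{n-2}(n-1-k)\cdot 2p_{n,d}(1,k+1)$. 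Applying \cref{conj:nd1j} term by term then equates the two sums, and combining this with the inductive hypothesis $b_{n-1,d}=p_{n-1,d}$ yields $b_{n,d}=p_{n,d}$, closing the induction.

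I do not anticipate a real obstacle here: the Toeplitz and symmetry results of \cref{sec:Toeplitz} do the heavy lifting, and the argument ultimately reduces to a rearrangement of sums. The only detail that merits care is verifying the neighbor-of-$n$ decomposition, in particular the two structural facts that $n$ never appears in position~$1$ of a ballot permutation and that odd order permutations contain no $2$-cycles, so that each side of \cref{conj:Spiro} is cleanly partitioned by the value of the pair of neighbors of $n$.
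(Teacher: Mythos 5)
Your argument is correct and is essentially the paper's own proof: the same neighbor-of-$n$ recurrences $b_{n,d}=b_{n-1,d}+\sum_{i\ne j}b_{n,d}(i,j)$ and $p_{n,d}=p_{n-1,d}+\sum_{i\ne j}p_{n,d}(i,j)$, the Toeplitz and symmetry results of \cref{thm:Toeplitz:B,thm:Toeplitz:P} to reduce every pair $(i,j)$ to the first row/column where \cref{conj:nd1j} applies, and induction on $n$. You merely spell out the regrouping of the double sums more explicitly than the paper does (which is fine), and the only edge case worth a word, $d=\floor{(n-1)/2}$ with $n$ odd, is harmless since then $b_{n-1,d}=p_{n-1,d}=0$.
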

\begin{proof}
Fix $n,d,i,j$.
Since $b_{n,d}(1,j)+b_{n,d}(j,1)=2p_{n,d}(1,j)$ for all $n$, $d$ and $j$, 
we can infer by \cref{thm:Toeplitz:P,thm:Toeplitz:B} that
\[
b_{n,d}(i,j)+b_{n,d}(j,i)=2p_{n,d}(i,j).
\]
Since the number of permutations in $\B_{n,d}$ with 
the letter $n$ appearing at the last position is $b_{n-1,\,d}$,
we obtain
\begin{equation}\label{eq:bnd}
b_{n,d}=b_{n-1,\,d}+\sum_{i\ne j}b_{n,d}(i,j).
\end{equation}
Since the number of permutations in $\P_{n,d}$ with the letter~$n$ 
forming a singleton is~$p_{n-1,\,d}$, 
we obtain
\begin{equation}\label{eq:pnd}
p_{n,d}=p_{n-1,\,d}+\sum_{i\ne j}p_{n,d}(i,j).
\end{equation}
Since $p_{n,d}(i,j)=p_{n,d}(j,i)$ and \cref{conj:Spiro} holds 
for $d=\floor{(n-1)/2}$, by induction on $n$, we derive \cref{conj:Spiro}
by the recurrences \cref{eq:bnd,eq:pnd}.
\end{proof}

\section{Easy cases of \cref{conj:nd1j}}\label{sec:easy}

In this section we solve some easy cases for \cref{conj:nd1j}. 

\begin{proposition}
\Cref{conj:nd1j} is true for $d=0$ and $d=1$.
\end{proposition}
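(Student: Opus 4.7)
The plan is to verify the identity by computing both sides explicitly in each case, exploiting the rigidity of permutations with very few descents (respectively, very low cyclic weight).

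For $d=0$ the situation is essentially trivial: the only ballot permutation of $[n]$ with no descent is the increasing word $12\dots n$, in which $n$ sits at the last position and cannot occupy the middle of a length-three factor; hence $b_{n,0}(1,j)=b_{n,0}(j,1)=0$. Likewise, the only odd order permutation with cyclic weight $0$ is the identity, since any cycle of length at least~$2$ satisfies $\cdes\ge 1$ and $\casc\ge 1$; thus $n$ is a fixed point and $p_{n,0}(1,j)=0$. Both sides of the identity vanish.

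For $d=1$ I would first use unimodality: a ballot permutation with one descent has the form $\pi_1<\dots<\pi_p>\pi_{p+1}<\dots<\pi_n$ with $p\ge 2$, and the factor $1nj$ (respectively $jn1$) pins the peak to the letter~$n$. For a permutation in $\B_{n,1}(1,j)$, the increasing prefix ending at~$1$ must have length~$1$, so the suffix after $j$ consists of the remaining $n-3$ letters in increasing order and all strictly greater than~$j$; this is possible only when $j=2$, giving $b_{n,1}(1,j)=1$ if $j=2$ and $0$ otherwise. For a permutation in $\B_{n,1}(j,1)$, the increasing prefix ending at~$j$ may consist of any subset of $\{2,\dots,j-1\}$, while the remaining letters after~$1$ form the unique increasing tail, yielding $b_{n,1}(j,1)=2^{j-2}$.

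For $\P_{n,1}(1,j)$, the weight-$1$ condition forces all elements outside a single non-trivial cycle $C$ to be fixed points, where $C$ has odd length $\ell\ge 3$ and contains $1,n,j$ in that cyclic order. Writing $C=(1,n,j,c_1,\dots,c_{\ell-3})$, the cyclic pairs $(n,j)$ and $(c_{\ell-3},1)$ (or $(j,1)$ if $\ell=3$) are automatically cyclic descents, so $\cdes(C)\ge 2$; therefore the weight condition $\min(\cdes(C),\casc(C))=1$ specialises to $\casc(C)=1$, with $(1,n)$ as the unique cyclic ascent. Requiring every remaining cyclic pair to be a descent forces $j>c_1>\dots>c_{\ell-3}>1$ with each $c_i\in\{2,\dots,j-1\}$, and summing over the admissible even values $\ell-3=2m$ gives
\[
p_{n,1}(1,j)=\sum_{m\ge 0}\binom{j-2}{2m},
\]
which equals $1$ for $j=2$ and $2^{j-3}$ for $j\ge 3$. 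Combining the three formulas yields $b_{n,1}(1,j)+b_{n,1}(j,1)=2p_{n,1}(1,j)$ in both subcases. The only step that requires thought rather than bookkeeping is the symmetry-breaking observation that weight~$1$ forces $\casc(C)=1$ rather than $\cdes(C)=1$; once this is in hand the remaining count reduces to the standard identity on even binomial sums.
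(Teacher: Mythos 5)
Your proof is correct and takes essentially the same route as the paper: it verifies the identity by explicitly computing $b_{n,1}(1,j)$, $b_{n,1}(j,1)$ and $p_{n,1}(1,j)$, arriving at the same values ($b_{n,1}(1,j)=0$ for $j\ge3$ and $1$ for $j=2$, $b_{n,1}(j,1)=2^{j-2}$, $p_{n,1}(1,j)=2^{j-3}$ for $j\ge3$ and $1$ for $j=2$) that the paper records, with your unimodality and single-nontrivial-cycle analyses supplying the details the paper dismisses as ``easy to show''. For $d=0$ you correctly note that both sides vanish (the paper's remark that both sides equal $1$ is a harmless slip), so the equality holds trivially in that case as well.
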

\begin{proof}
When $d=0$, both sides the desired equality equal to $1$. Let $d=1$.
By \cref{lem:|i-j|=1}, we can infer that
\[
b_{n,1}(1,2)=b_{n,1}(2,1)=b_{n-2,\,0}=1
\quad\text{and}\quad 
p_{n,1}(1,2)=p_{n-2,\,0}=1.
\]
For $j\ge3$, it is easy to show that 
\[
b_{n,1}(j,1)=2^{j-2},\qquad
b_{n,1}(1,j)=0,\quad\text{and}\quad
p_{n,1}(1,j)=2^{j-3}.
\]
This verifies \cref{conj:nd1j} for $d=1$ by direct calculation.
\end{proof}

\begin{lemma}\label{lem:pnd:12=13}
$p_{n,d}(1,2)=p_{n,d}(1,3)$.
\end{lemma}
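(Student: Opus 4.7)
The plan is to construct an explicit bijection $\Psi\colon\P_{n,d}(1,3)\to\P_{n,d}(1,2)$ whose definition splits according to whether the letter $2$ is the cyclic successor of $3$ in the cycle carrying the largest letter $n$. Fix $\pi\in\P_{n,d}(1,3)$ and let $C$ be the cycle containing the cyclic factor $1n3$. If $2$ is \emph{not} the cyclic successor of $3$ in $C$---so either $2$ lies outside $C$, or $2$ sits at a position in $C$ other than the one immediately following $3$---then I would define $\Psi(\pi)$ by swapping the labels $2$ and $3$ throughout $\pi$. The letters adjacent in any cycle of $\pi$ to the positions of $2$ and~$3$ all lie in $\{1,n\}\cup\{4,\dots,n-1\}$, and comparing such a letter with $2$ yields the same descent/ascent verdict as comparing it with $3$. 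Hence every cyclic descent and every cyclic ascent of every cycle is preserved by the swap; the odd-order condition holds because cycle lengths are unchanged, and so is the cyclic weight $d$.

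The main obstacle is the remaining case, where $2$ is the cyclic successor of $3$ in $C$. Then
\[
C=(1,n,3,2,y_1,y_2,\dots,y_k)
\]
for some odd $k\ge 1$ and distinct $y_i\in\{4,\dots,n-1\}$. A plain swap no longer works: it turns $(3,2)$ into the ascent $(2,3)$, so $\cdes(C)$ drops by $1$ and $\casc(C)$ grows by $1$, which generally shifts the minimum of the two. My remedy is to replace $C$ by
\[
C':=(1,n,2,3,y_k,y_{k-1},\dots,y_1),
\]
while all other cycles of $\pi$ are left untouched. Writing $D$ for the number of descents of the word $y_1y_2\cdots y_k$, counting the boundary pairs in the cycles gives $\cdes(C)=3+D$ and $\cdes(C')=(k+1)-D$, using that reversing $y_1\cdots y_k$ interchanges its $D$ descents with its $(k-1)-D$ ascents. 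Since $\abs{C}=\abs{C'}=k+4$, this yields $\cdes(C')=\casc(C)$ and $\casc(C')=\cdes(C)$; the weight of $C$ is therefore preserved, and $\Psi(\pi)\in\P_{n,d}(1,2)$.

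It remains to show that $\Psi$ is bijective. The images of the two branches are disjoint subsets of $\P_{n,d}(1,2)$, separated by the same dichotomy on the target side: whether $3$ is the cyclic successor of $2$ in the cycle carrying $1n2$. The first branch produces exactly the permutations in which $3$ is \emph{not} such a successor (either $3$ lies in a different cycle, or it sits at a position different from the one right after $2$), and the second branch produces exactly those in which $3$ is the cyclic successor of~$2$. The inverse is defined symmetrically---swap $2$ and $3$ when $3$ is not the successor of~$2$, otherwise replace a cycle $(1,n,2,3,w_1,\dots,w_k)$ by $(1,n,3,2,w_k,\dots,w_1)$---and the compositions $\Psi^{-1}\circ\Psi$ and $\Psi\circ\Psi^{-1}$ collapse at once to the identity. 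The only delicate step is the descent/ascent bookkeeping in the reversal branch, sketched above; once that accounting is in hand the bijection is transparent and the lemma follows.
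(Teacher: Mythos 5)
Your proof is correct and is essentially the paper's argument: the paper's map $\phi$ also reduces to swapping the letters $2$ and $3$ when they are not cyclically adjacent, and otherwise replaces the cycle $(1n23\alpha)$ by $(1n32\alpha')$, which is exactly your reversal branch read in the opposite direction, with the same descent/ascent bookkeeping $\cdes(c)=\casc(u)$, $\casc(c)=\cdes(u)$. Your write-up just makes the case split and the inverse map more explicit than the paper does.
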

\begin{proof}
We define a map $\phi\colon \P_{n,d}(1,2)\to\P_{n,d}(1,3)$ as follows. 
Let $\pi\in\P_{n,d}(1,2)$.
By considering the exchange of the letters $2$ and $3$,
we can suppose that there exists a nonempty word $\alpha$ such that~$\pi$
contains a cycle $c=(1n23\alpha)$. 
Define $\phi(\pi)$ to be the permutation obtained from $\pi$ 
by replacing the cycle $c$ by the cycle $u=(1n32\alpha')$.
It is clear that $\phi$ is an involution.
Let $d=\des(\alpha)$.
Then 
\[
\cdes(c)=d+2=\casc(u)
\quad\text{and}\quad
\casc(c)=\ell(c)-d-2=\cdes(u).
\]
It follows that $c$ and $u$ have the same weight, 
and so do the permutations $\pi$ and~$\phi(\pi)$. 
This completes the proof.
\end{proof}

\begin{proposition}
If \cref{conj:nd1j} is true for $j=2$,
then it is true for $j=3$.
\end{proposition}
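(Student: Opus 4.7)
The plan is to first reduce the $j=3$ case to a purely ballot-permutation identity, and then prove that identity by combining a value-swap involution with a local block replacement. By \cref{lem:|i-j|=1} we have $b_{n,d}(1,2) = b_{n,d}(2,1) = b_{n-2,d-1}$ and $p_{n,d}(1,2) = p_{n-2,d-1}$, and by \cref{lem:pnd:12=13} also $p_{n,d}(1,3) = p_{n,d}(1,2)$. The hypothesis that \cref{conj:nd1j} holds at $j=2$ is precisely $b_{n-2,d-1} = p_{n-2,d-1}$. Combining these four identities shows that the target equality for $j=3$ is equivalent to
\[
b_{n,d}(1,3) + b_{n,d}(3,1) = b_{n,d}(1,2) + b_{n,d}(2,1).
\]

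For this reduced identity I would introduce the involution $\Psi$ on $\mathfrak{S}_n$ that exchanges the values $2$ and $3$. Whenever $2$ and $3$ are non-adjacent in $\pi$, every pair-wise sign of $\pi$ is preserved by $\Psi$, so $\Psi$ preserves both the descent count and the ballot property. In each of the four sets $\B_{n,d}(1,3)$, $\B_{n,d}(3,1)$, $\B_{n,d}(1,2)$, $\B_{n,d}(2,1)$, adjacency of $2$ and $3$ is achievable only via a single four-letter block pattern, namely $1n32$, $23n1$, $1n23$, $32n1$, respectively; denote the corresponding subsets $A_1,A_2,C_1,C_2$. Thus $\Psi$ restricts to bijections between the non-adjacent parts $\B_{n,d}(1,3)\setminus A_1\leftrightarrow \B_{n,d}(1,2)\setminus C_1$ and $\B_{n,d}(3,1)\setminus A_2\leftrightarrow \B_{n,d}(2,1)\setminus C_2$, and the identity is further reduced to $|A_1|+|A_2|=|C_1|+|C_2|$. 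This I would prove by in-place block-swap bijections $\phi\colon A_1\to C_2$ and $\phi'\colon A_2\to C_1$ that replace $1n32\leftrightarrow 32n1$ and $23n1\leftrightarrow 1n23$, leaving the remaining letters fixed.

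The main obstacle lies in verifying that $\phi$ and $\phi'$ preserve both the descent count and the ballot property. Since the letters bordering a block must lie in $\{4,\dots,n-1\}$, the two boundary signs (a descent on the left and an ascent on the right) and the internal descent counts ($2$ for $\{1n32,32n1\}$, $1$ for $\{23n1,1n23\}$) already match across each pair. For the ballot condition one must additionally check that the interior height profiles of the sibling blocks agree in both their minimum value ($h(P)-2$ for the first pair and $h(P)-1$ for the second) and their post-block height, so that the ballot condition imposes exactly the same constraint on the prefix $P$ in each case. Once this small direct height computation is done, $\phi$ and $\phi'$ are bijections, giving $|A_1|+|A_2|=|C_1|+|C_2|$ and hence the proposition.
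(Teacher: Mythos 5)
Your proposal is correct and follows essentially the paper's own route: the same reduction via \cref{lem:pnd:12=13} and the $j=2$ hypothesis to the pure ballot identity $b_{n,d}(1,3)+b_{n,d}(3,1)=b_{n,d}(1,2)+b_{n,d}(2,1)$, followed by the same $2$--$3$ value swap handling all permutations in which $2$ and $3$ are not adjacent. The only divergence is the last step: the paper evaluates the four adjacent-block classes by deleting three letters, obtaining $b_{n,d}(1,23)=b_{n,d}(23,1)=b_{n-3,\,d-1}$ and $b_{n,d}(1,32)=b_{n,d}(32,1)=b_{n-3,\,d-2}$ as in the proof of \cref{lem:|i-j|=1}, whereas you match them directly by the in-place block replacements $1n32\leftrightarrow 32n1$ and $23n1\leftrightarrow 1n23$; your boundary-sign, descent-count and height-profile checks do go through, so this is a valid (slightly more self-contained) variant of the same argument.
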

\begin{proof}
Suppose that \cref{conj:nd1j} is true for $j=2$, that is,
\[
b_{n,d}(1,2)+b_{n,d}(2,1)=2p_{n,d}(1,2).
\]
For the sake of showing \cref{conj:nd1j} for $j=3$, i.e.,
\[
b_{n,d}(1,3)+b_{n,d}(3,1)=2p_{n,d}(1,3),
\]
by \cref{lem:pnd:12=13}, it suffices to show that
\[
b_{n,d}(1,2)-b_{n,d}(1,3)
=
b_{n,d}(3,1)-b_{n,d}(2,1).
\]
For words $u$ and $v$, let $b_{n,d}(u,v)$ 
be the number of permutations in $\B_{n,d}$ that contain the factor $unv$.
By exchanging the letters $2$ and $3$, we obtain
\begin{align}
b_{n,d}(1,2)-b_{n,d}(1,3)
&=b_{n,d}(1,23)-b_{n,d}(1,32),\quad\text{and}\quad\label{pf:1}\\
b_{n,d}(3,1)-b_{n,d}(2,1)
&=b_{n,d}(23,1)-b_{n,d}(32,1).\label{pf:2}
\end{align}
Using the proof of \cref{lem:|i-j|=1}, one may show that 
\begin{align}
b_{n,d}(1,23)&=b_{n-3,\,d-1}=b_{n,d}(23,1)\quad\text{and}\quad\label{pf:3}\\
b_{n,d}(1,32)&=b_{n-3,\,d-2}=b_{n,d}(32,1).\label{pf:4}
\end{align}
Combining \cref{pf:1,pf:2,pf:3,pf:4}, we can deduce 
\begin{align*}
b_{n,d}(1,2)-b_{n,d}(1,3)
&=b_{n,d}(1,23)-b_{n,d}(1,32)\\
&=b_{n,d}(23,1)-b_{n,d}(32,1)
=b_{n,d}(3,1)-b_{n,d}(2,1).
\end{align*}
This completes the proof.
\end{proof}

\section*{Acknowledgment}
We are grateful to Professor Richard Stanley for his encouragement 
of working on this project, 
and to Professor Olivier Bernardi for his interests of the new proof
of \cref{thm:b=p}. 
The main part of this paper was completed when the first author 
was a visiting scholar at MIT.

\end{document}